\DeclareMathAlphabet{\mathbbold}{U}{bbold}{m}{n}
\def\k{\mathbbold{k}}
\DeclareSymbolFont{rsfscript}{OMS}{rsfs}{m}{n}
\DeclareSymbolFontAlphabet{\mathrsfs}{rsfscript}
\DeclareFontFamily{OMS}{rsfs}{\skewchar\font'177}
\DeclareFontShape{OMS}{rsfs}{m}{n}{%
      <5> rsfs5
      <6> <7> rsfs7
      <8> <9> <10> rsfs10
      <10.95> <12> <14.4> <17.28> <20.74> <24.88> rsfs10
      }{}
\def\calM{\mathrsfs{M}}
\DeclareMathOperator{\Cl}{Cl}
\DeclareMathOperator{\cl}{cl}
\DeclareMathOperator{\st}{st}
\theoremstyle{plain}
\newtheorem {theorem}{Theorem}
\newtheorem {lemma}{Lemma}
\newtheorem {corollary}{Corollary}
\newtheorem {proposition}{Proposition}
\theoremstyle{definition}
\newtheorem {definition}{Definition}
\newtheorem {example}{Example}
\begin{document}

\title{Anick-type resolutions\\ and consecutive pattern avoidance}
\author{Vladimir Dotsenko}
\address{Dublin Institute for Advanced Studies, 10 Burlington Road, Dublin 4, Ireland and School of Mathematics, Trinity College, Dublin 2, Ireland}
\email{vdots@maths.tcd.ie}
\author{Anton Khoroshkin}
\address{Departement Matematik, ETH, R\"amistrasse 101, 8092 Zurich, Switzerland and 
ITEP, Bolshaya Cheremushkinskaya 25, 117259, Moscow, Russia}
\email{anton.khoroshkin@math.ethz.ch}

\thanks{The first author's research was supported by the grant RFBR-CNRS-07-01-92214 and by an IRCSET research fellowship. The second author's research was supported by grants
RFBR-10-01-00836, NSh-65290.2010.2, RFBR-CNRS-07-01-92214, and by a ETH research fellowship.
}

\begin{abstract}
For permutations avoiding consecutive patterns from a given set, we present a combinatorial formula for the multiplicative inverse of the corresponding exponential generating function. The formula comes from homological algebra considerations in the same sense as the corresponding inversion formula for avoiding word patterns comes from the well known Anick's resolution. 
\end{abstract}

\maketitle

\section{Introduction}

The purpose of this paper is to present a formula for the multiplicative inverse for the formal series enumerating permutations avoiding the given set of consecutive patterns. There are various formulas of that sort, one based on a version of inclusion--exclusion principle, namely the cluster method of Goulden and Jackson~\cite{GJ2,NZ}, and another more recent and much more compact, based on the symmetric functions method of Mendes and Remmel~\cite{MR}. Our formula should be thought of as a mixture of these two: on one hand, it is based on combinatorial data somewhat similar to Goulden--Jackson clusters, on the other hand, it takes care of most cancellations which more resembles what happens in~\cite{MR}.  

In the case of pattern avoidance in words, a similar result was obtained by Anick~\cite{Anick}; however, the emphasis of his paper was on applications to homological algebra, and it never attracted attention of specialists in enumerative combinatorics. Consequently, a rather straightforward generalization of his method to the case of consecutive patterns in permutations has never been discovered. We present such a generalisation in this paper. Our intuition here comes from homological algebra as well; our construction is based on free resolutions of Anick type for shuffle algebras~\cite{Ronco} defined by generators and relations. This approach extends without any changes to the case of coloured permutations avoiding the given set of consecutive patterns~\cite{Ma}. For the original Goulden--Jackson formula, a homological proof can also be obtained, using constructions in the spirit of~\cite{DKRes}; we do not intend to discuss it in detail. 

This paper is organized as follows. In Section~\ref{cluster} we briefly recall the Goulden--Jackson cluster method, define ``chains'' (which give an improved version of Goulden--Jackson clusters), and prove the inversion formula. To make most of the text accessible to the general mathematical audience, we chose to present the proof in the most elementary way, and use a sign-reversing involution instead of talking about boundary maps and chain homotopies.

In Section~\ref{examples}, we consider various applications. Even staying within the cluster method, it is possible to solve some problems on consecutive pattern avoidance. In particular, we deduce a result in theory of pattern avoidance that seems to be new: for a permutation~$\tau$ of length~$k$ without self-overlaps, the number of permutations of given length with the given number of occurrences of~$\tau$ depends only on $k$, $\tau(1)$, and $\tau(k)$. This was formulated as a conjecture by Sergi Elizalde~\cite{Elizalde}.\footnote{While preparing this paper, we learned that this conjecture is proved in an upcoming paper of Jeffrey Remmel, based on methods developed in~\cite{MR}. We wish to thank Sergey Kitaev for informing us of that.} We also show how our method applies to patterns of length~$4$ (obtaining some formulas that seem to be new), simultaneous avoidance of some patterns of length~$3$, and a well known result on rises in permutations..

In Appendix, we briefly explain the homological algebra behind the story, putting our proofs in the context of homological algebra for shuffle algebras~\cite{Ronco} and ideals of the associative operad~\cite{Lat1,Lat2}.

The authors wish to thank Sergi Elizalde and Sergey Kitaev for their remarks. The work on this paper started when the second author was visiting Dublin Institute for Advanced Studies; he expresses his gratitude to all the staff there for their hospitality. 

\section{Clusters and chains}\label{cluster}

\subsection{Consecutive pattern avoidance}

Let us recall some definitions and notation. A permutation of length $n$ is a sequence containing each of the numbers $1$,\ldots, $n$ exactly once. To every sequence $s$ of length $k$ consisting of $k$ distinct numbers, we assign a permutation $\st(s)$ of length $k$ called the standardization of~$s$; it is uniquely determined by the condition that $s_i< s_j$ if and only if $\st(s)_i<\st(s)_j$, for example, $\st(153)=(132)$. In other words, $\st(s)$ is a permutation whose relative order of entries is the same as that of~$s$. We say that a permutation $\sigma$ avoids the given permutation $\tau$ as a consecutive pattern if for each $i<j$ we have $\st(\sigma_i\sigma_{i+1}\ldots\sigma_j)\ne\tau$, otherwise we say that $\sigma$ contains $\tau$ as a consecutive pattern. Throughout this paper, we only deal with consecutive patterns, so the word ``consecutive'' will be omitted. For historical information on pattern avoidance in general and the state-of-art for consecutive patterns, we refer the reader to~\cite{KitHist,Stein}. 

The central question arising in the theory of pattern avoidance is that of enumeration of permutations of given length that avoid the given set of forbidden patterns~$P$ or, more generally, contain the given number of occurrences of patterns from~$P$. This question naturally leads to the following equivalence relations. Two sets of patterns $P$ and $P'$ are said to be Wilf equivalent (notation: $P\simeq_W P'$) if for every $n$, the number of $P$-avoiding permutations of length~$n$ is equal to the number of $P'$-avoiding permutations of length~$n$. This notion (in the case of one pattern) is due to Wilf~\cite{Wilf}. More generally, $P$ and $P'$ are said to be equivalent (notation: $P\simeq P'$) if for every $n$ and every $0\le k<n$, the number of permutations of length~$n$ with~$k$ occurrences of patterns from $P$ is equal to the number of permutations of length~$n$ with $k$ occurrences of patterns from~$P'$.
 
While studying the equivalence classes of patterns, sometimes it is possible to replace the set of forbidden patterns by an equivalent one with less patterns in it. 
Namely, we have a partial ordering on the set of all permutations (of all possible lengths), namely, $\tau<\sigma$ if $\sigma$ contains $\tau$ as a consecutive pattern. Given a set $P$ of ``forbidden'' patterns, to enumerate the permutations avoiding all patterns from~$P$, we may assume that $P$ is an antichain with respect to this partial ordering. Indeed, ignoring all patterns from~$P$ that contain a smaller forbidden subpattern does not change the set of $P$-avoiding permutations. Therefore, throughout the paper we shall assume that forbidden patterns do indeed form an antichain.

\subsection{Cluster method}

The cluster method of Goulden and Jackson~\cite{GJ2,NZ} is a powerful method of enumeration of words and permutations according to the number of consecutive occurences of certain patterns. Informally, a cluster is a way to link together several patterns from the given set. More precisely in the case of permutations $q$-clusters relative to the given pattern set~$P$ are triples $(\sigma,\pi_1,\ldots,\pi_q,i_1,\ldots,i_q)$ such that
\begin{itemize}
\item[-] for every $k=1,\ldots,q$, $\st(\sigma_{i_1},\ldots,\sigma_{i_k+r_k-1})=\pi_k$, where $r_k$ is the length of~$\pi_k$ ($i_k$ marks the occurrence of the $k^\text{th}$ pattern $\pi_k$);
\item[-] $l_{k+1}>l_k$ (patterns are listed from the left to the right) and $l_{k+1}<l_k+r_k$ (adjacent patterns are linked);
\item[-] $l_1=1$, and the length of $\sigma$ is equal to $l_q+r_q-1$ ($\sigma$ is completely covered by patterns $\pi_1,\ldots,\pi_q$).
\end{itemize}

Let us denote by~$\pi_{n,k}$ the number of permutations of length~$n$ with exactly $k$ occurrences of patterns from~$P$, and by
$\cl_{n,q}$~--- the number of $q$-clusters where the permutation~$\sigma$ has length~$n$. We also consider the generating functions 
$\Pi(x,t)=\sum_{n,k}\pi_{n,k}\frac{x^n}{n!}t^k,$ and $\Cl(x,t)=\sum_{n,q}\cl_{n,q}\frac{x^n}{n!}t^q$. 
The following enumeration theorem is an immediate consequence of the cluster method.
\begin{theorem}[\cite{GJ2}]
We have 
\begin{equation}\label{enum_cluster}
\Pi(x,t)=\frac{1}{1-x-\Cl(x,t-1)}. 
\end{equation}
Consequently, the exponential generating function for permutations avoiding patterns from the set $P$ is  
\begin{equation}\label{avoid_cluster}
\frac{1}{1-x-\Cl(x,-1)}.
\end{equation}
\end{theorem}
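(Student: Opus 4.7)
The plan is to establish the first formula by a standard double-counting (or bivariate EGF) argument; the second then follows by specializing $t=0$.

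First I would introduce the auxiliary notion of a \emph{marked} permutation. A marked permutation is a pair $(\sigma, S)$ where $\sigma$ is a permutation of some length $n$ and $S$ is an arbitrary subset of the set of occurrences of patterns from $P$ inside $\sigma$. I assign to $(\sigma, S)$ the weight $s^{|S|}$. Summing over all $S$ with $\sigma$ fixed, each $\sigma$ contributes
\[
\sum_{k\ge 0} \binom{N(\sigma)}{k} s^k = (1+s)^{N(\sigma)},
\]
where $N(\sigma)$ is the total number of occurrences of patterns from $P$ in $\sigma$. Summing over all $\sigma$ and using the definition of $\Pi(x,t)$, the total EGF of marked permutations weighted by $s^{|S|}$ is exactly $\Pi(x, 1+s)$.

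Next I would compute the same EGF in a different way, by decomposing each marked permutation into consecutive blocks. Given $(\sigma, S)$, group together positions covered by marked pattern occurrences that are linked through chains of overlaps; the remaining positions are uncovered. This partitions the positions $\{1,\ldots,n\}$ of $\sigma$ into a sequence of intervals, each interval being either a single uncovered letter or a maximal union of linked marked pattern occurrences. By the definition of a cluster, the latter is exactly a cluster (the patterns cover their interval, adjacent patterns overlap, the first pattern begins at the left end, and the last pattern ends at the right end). Conversely, any such sequence of single letters and clusters, together with a choice of which values from $\{1,\ldots,n\}$ land in which block (the values within each block are then standardized to recover the cluster or single letter), gives back a marked permutation in a unique way. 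Because we are dealing with EGFs, this decomposition translates precisely into the multiplicative rule for labeled structures: the EGF for a single atom is $x + \Cl(x, s)$, and the EGF for finite sequences of atoms is $\frac{1}{1 - x - \Cl(x, s)}$. Thus
\[
\Pi(x, 1+s) = \frac{1}{1 - x - \Cl(x, s)}.
\]
Substituting $s = t-1$ yields~\eqref{enum_cluster}, and setting $t=0$ in~\eqref{enum_cluster} yields~\eqref{avoid_cluster} since $\Pi(x,0)$ is precisely the EGF of $P$-avoiding permutations.

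The only delicate step is verifying that the atom decomposition is bijective and compatible with the EGF product/sequence formalism. The subtlety is that the decomposition depends on $S$, not just $\sigma$: different markings of the same permutation produce different block structures, and one must check that linking adjacent marked occurrences into maximal clusters yields a well-defined ordered partition of positions whose blocks are each either a single uncovered letter or a bona fide cluster in the sense of the definition above. Once this is set up carefully, no cancellation or sign issues arise, and the rest is purely formal manipulation of exponential generating functions.
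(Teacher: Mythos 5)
Your argument is correct, and it is the standard proof of the cluster method; note that the paper itself does not prove this statement at all --- it is quoted from Goulden--Jackson (and Noonan--Zeilberger) as ``an immediate consequence of the cluster method'' --- so there is nothing internal to compare against, and supplying the marking/transfer argument is exactly the right thing to do. Your two counts of marked permutations, $\Pi(x,1+s)$ on one side and the sequence generating function $\bigl(1-x-\Cl(x,s)\bigr)^{-1}$ on the other, are both right, and the substitution $s=t-1$, $t=0$ finishes the job. The one step you flag but leave implicit deserves to be written out, and it is where the paper's standing antichain assumption on $P$ enters: you must check that a maximal overlap-connected family of marked occurrences, listed by starting position, has each occurrence overlapping the \emph{next} one (the paper's condition $l_{k+1}<l_k+r_k$), since overlap-connectivity alone does not give this when one occurrence can be nested inside another (e.g.\ intervals $[1,5],[2,3],[4,6]$ are pairwise-connected through $[1,5]$ but $[2,3]$ and $[4,6]$ are disjoint). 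Because $P$ is an antichain, no occurrence of a pattern from $P$ can sit inside another (its standardization would be a forbidden subpattern of the larger pattern), so left endpoints and right endpoints are ordered consistently, and then connectivity of the overlap graph does force consecutive occurrences to overlap; this also gives that distinct components occupy disjoint intervals of positions, so the block decomposition and its inverse are well defined. With that paragraph added, your proof is complete and matches the classical argument in the cited sources.
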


\subsection{Chains and series inversion}\label{chains}

In this section, we show how to improve the cluster method inversion formula for pattern avoidance~\eqref{avoid_cluster} for a general pattern set~$P$. Basically, for patterns without self-overlaps, there is nothing to improve: no cancellations happen in the formulas written above. However, in general a permutation $\sigma$ can occur in several different $q$-clusters for different~$q$, which will result in cancellations in $Cl(x,-1)$. We shall explain what combinatorial objects correspond to coefficients after these obvious cancellations. We still assume that $P$ is an antichain, that is, patterns from~$P$ are not contained in one another.

The main combinatorial concepts we need are $q$-chains and their tails. They are defined inductively as follows:
\begin{itemize}
\item[-] empty permutation is a $0$-chain, it coincides with its tail;
\item[-] the only permutation of one element is a $1$-chain, it also coincides with its tail;
\item[-] each $q$-chain is a permutation $\sigma$ equal to the concatenation $\sigma'\tau$, where $\tau$ is the tail of $\sigma$, and $\st(\sigma')$ is a $(q-1)$-chain;
\item[-] if we denote by~$\tau'$ the tail of $\sigma'$ in the above decomposition, there exists a ``factorization'' $\tau'=\alpha\beta$  with $\st(\beta\tau)\in P$, 
and $\beta\tau$ is the only occurrence of a pattern from~$P$ in $\tau'\tau$.
\end{itemize}

The way we define the chains here is slightly different from the original approach of Anick~\cite{Anick}; the reader familiar with the excellent textbook of Ufnarovskii~\cite{Ufn} will rather notice similarities with the approach to Anick's resolution adopted there.

Basically, $(q+1)$-chains are $q$-clusters with additional restrictions: only neighbours are linked, the first $q$ patterns form an $q$-chain, and no proper beginning forms an $(q+1)$-chain (to be precise, one should apply standardization for the last two properties to make sense).

Let us give some examples clarifying the notion of a chain. For example, if $P=\{12\}$, the only $n$-chain for each $n$ is $12\ldots (n+1)$, while if $P=\{123\}$, we can easily see that $123$ is the only $1$-chain, and $1234$ is the only $2$-chain, but $12345$ is not a $2$-chain because it starts from a $2$-chain $1234$, and is not a $3$-chain because in the only tiling of this permutation by three copies of our pattern its first and third occurences overlap: 
 $$
\underbrace{1\quad 
 2\quad \makebox[0pt][l]{$\overbrace{\phantom{2\quad 3\quad 4}}$}3}\quad 4\quad 5.
 $$

\begin{lemma}
If $\sigma$ is an $n$-chain, the way to link patterns from~$P$ to one another to form~$\sigma$ is unique.
\end{lemma}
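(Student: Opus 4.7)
The plan is to prove the lemma by induction on $n$, in parallel with the auxiliary statement $(\star)$: for every $m\ge 0$, no proper prefix of an $m$-chain is itself an $m$-chain. Both statements are immediate for $n\le 1$: a $0$- or $1$-chain admits only the trivial linking, and has no nontrivial proper prefix that could be a chain of the same length.

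For the inductive step, assume the lemma and $(\star)$ for all $m<n$, and let $\sigma$ be an $n$-chain with two linkings $L_1$ and $L_2$. Each $L_i$ corresponds to a decomposition $\sigma=\sigma_i'\tau_i$ with $\st(\sigma_i')$ an $(n-1)$-chain (under $L_i$) and $\beta_i\tau_i$ the unique occurrence of a pattern from $P$ inside $\tau_i'\tau_i$, where $\tau_i'$ is the tail of $\sigma_i'$. First I would show $|\sigma_1'|=|\sigma_2'|$: otherwise, say $\sigma_1'$ is a proper prefix of the $(n-1)$-chain $\sigma_2'$, contradicting $(\star)$ at level $m=n-1$. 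So $\sigma_1'=\sigma_2'$; the inductive lemma applied to this common $(n-1)$-chain prefix yields $\tau_1'=\tau_2'$ and agreement of $L_1$ and $L_2$ on the prefix. Finally, $\beta_1\tau_1$ and $\beta_2\tau_2$ both sit inside $\tau_1'\tau_1=\tau_2'\tau_2$, so the uniqueness clause in the chain definition forces them to coincide, giving $L_1=L_2$.

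It remains to carry $(\star)$ through the induction. Suppose $\sigma$ is an $n$-chain with decomposition $\sigma=\sigma'\tau$, and that a proper prefix $\sigma_{1..k}$ with $k<|\sigma|$ is also an $n$-chain, with inner $(n-1)$-chain prefix $\sigma''$ and last pattern $\beta''\tau''$. If $|\sigma''|\ne|\sigma'|$, then one of $\sigma',\sigma''$ is a proper prefix of the other, contradicting $(\star)$ at $m=n-1$. If $|\sigma''|=|\sigma'|$, then $\sigma''=\sigma'$, and the inductive lemma forces the two $(n-1)$-chain structures on this common prefix to coincide; in particular the tail of $\sigma''$ equals $\tau'$. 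Then $\beta''\tau''$ is a pattern occurrence inside $\tau'\tau''$, itself a prefix of $\tau'\tau$, so the uniqueness clause of the chain definition forces $\beta''\tau''=\beta\tau$, contradicting the fact that $\beta\tau$ ends at $|\sigma|>k$.

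The main obstacle is precisely this joint induction. The ``last pattern'' of an $n$-chain is not pinned down by any naive rightmost-occurrence argument; one needs the minimality statement $(\star)$ at level $n-1$ to fix the length of the inner $(n-1)$-chain, and only then can the uniqueness clause of the chain definition be invoked to fix the last pattern itself.
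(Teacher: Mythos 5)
Your proof is correct and follows essentially the same route as the paper: the published argument likewise pins down, level by level, the endpoint of each pattern by noting that otherwise some $m$-chain would have a proper beginning which is again an $m$-chain, and then identifies the last pattern itself (the paper via the antichain hypothesis on $P$, you via the unique-occurrence clause in the definition of a chain). The only real difference is one of rigor: the paper treats the no-proper-prefix property---your statement $(\star)$---as immediate from the definition, whereas you establish it by the joint induction, which is a legitimate and more complete elaboration of the same idea.
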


\begin{proof}
Assume that there are two ways to link $n$ patterns to form~$\sigma$. Obviously, for each $m$, the endpoints of the $m^\text{th}$ patterns in these two linkages should coincide, otherwise we shall find an $m$-chain whose proper beginning is an $m$-chain as well. Once we know that the endpoints of $m^\text{th}$ patterns are the same, the beginnings have to be the same because $P$ is assumed to be an antichain (and so patterns from~$P$ cannot be contained in one another).
\end{proof}

Let us denote by $c_{n,k}$ the number of $k$-chains in~$\Sigma_n$; we put 
 $$c_n=\sum_{k}(-1)^kc_{n,k}.$$  
We also denote by $a_n$ the number of permutations in~$\Sigma_n$ avoiding all patterns from~$P$. Our main result here is that the corresponding exponential generation functions are multiplicative inverse to one another. Namely, let
 $$
A(t)=\sum_{n\ge0}a_n\frac{t^n}{n!}, \quad C(t)= \sum_{k\ge0}c_n\frac{t^n}{n!}.
 $$
\begin{theorem}
We have 
\begin{equation}\label{master}
A(t)C(t)=1.
\end{equation}
\end{theorem}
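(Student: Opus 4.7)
My plan is to prove $A(t)C(t)=1$ by producing a sign-reversing involution, matching the authors' stated preference for an elementary combinatorial argument.

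First, I would extract coefficients. The identity $A(t)C(t)=1$ is equivalent to
$$\sum_{j=0}^{n}\binom{n}{j}\,a_{n-j}\,c_{j}=\delta_{n,0}$$
for all $n\ge 0$, and the left-hand side (after expanding $c_j=\sum_k(-1)^kc_{j,k}$) counts with sign $(-1)^k$ the set $X_n$ of pairs $(\pi,j)$ such that $\pi\in\Sigma_n$, $0\le j\le n$, the standardization of $\pi_1\ldots\pi_j$ is $P$-avoiding, and the standardization of $\pi_{j+1}\ldots\pi_n$ is a $k$-chain. The binomial coefficient and the standardization operations account for the shuffle of values between prefix and suffix implicit in the EGF product. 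For $n=0$ the unique pair $(\emptyset,0)$ contributes $+1$, matching the constant term; for $n\ge 1$ the claim is that the signed cardinality of $X_n$ vanishes, which I would establish via a fixed-point-free involution $\iota$.

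Second, I would define $\iota$ to fix $\pi$ and shift the split point $j$. Given $(\pi,j)\in X_n$ with $k$-chain suffix $\chi$, the chain uniqueness lemma pins down the inductive decomposition of $\chi$. Two natural moves arise: a \emph{shrink}, which removes the left part of $\chi$ down to the next smaller chain length (advancing $j$ and decreasing $k$ by one), and a \emph{grow}, which pulls the appropriate final segment of the prefix into the chain to form a new leftmost link (retracting $j$ and increasing $k$ by one). A canonical tie-breaking rule -- say ``shrink if the shrunk pair still lies in $X_n$, otherwise grow'' -- selects one move per pair. Each move flips the sign $(-1)^k$, so $\iota$ is sign-reversing; since shrink and grow are mutually inverse where both apply, $\iota$ is an involution.

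The main obstacle is verifying well-definedness and exhaustiveness. After a shrink, one must check the enlarged prefix is still $P$-avoiding: this uses the clause ``$\beta\tau$ is the only occurrence of a pattern from $P$ in $\tau'\tau$'' in the chain definition, applied to the link being removed, together with the antichain assumption on $P$ to preclude stray pattern occurrences in the newly absorbed segment. After a grow, one must check the extended suffix satisfies the inductive chain conditions (existence of the factorization $\tau'=\alpha\beta$, and uniqueness of pattern occurrence in the new $\tau'\tau$), which follows by essentially inverting the shrink argument. Finally, one shows that for every $(\pi,j)\in X_n$ with $n\ge 1$ at least one of shrink and grow applies, so there is no fixed point; this last step, ensuring the involution is total, appears to be the most delicate part of the proof and will require a careful case analysis on the local structure at position $j$, distinguishing whether a forbidden pattern straddles the interface or sits entirely on one side of it.
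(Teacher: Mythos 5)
Your reduction to the vanishing of a signed sum over split permutations is the same first step as the paper's, but your involution is set up on the wrong side of the bar, and this is a genuine gap, not a technicality. You factor $\pi$ as (avoiding prefix)$\mid$(chain suffix), so the only interface available to your moves is the \emph{left} end of the chain. But chains are defined recursively at their \emph{right} end: a $k$-chain is a $(k-1)$-chain followed by its tail, and the defining clause (the linking occurrence $\beta\tau$ is the unique occurrence of a pattern from $P$ in $\tau'\tau$) only constrains windows at successive right-hand junctions. Consequently your ``shrink'' move -- deleting the leftmost link to pass ``to the next smaller chain length'' -- need not produce a chain at all. Concretely, take $P=\{2143,\,1423\}$ (an antichain) and $\sigma=(2,1,4,3,9,5,8,6,7)$, with links at positions $1$--$4$, $3$--$6$, $6$--$9$. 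One checks the junction windows (positions $2$--$6$ and $5$--$9$) contain only the designated linking occurrences, so $\sigma$ is a chain with three links; but it also contains an extra occurrence $(3,9,5,8)\simeq 1423$ at positions $4$--$7$, which no chain condition forbids because it straddles the two windows. Deleting the first link's non-overlapping part leaves $(4,3,9,5,8,6,7)$, whose junction window (everything after its first entry) now contains \emph{two} occurrences, $(3,9,5,8)$ and $(5,8,6,7)$, so the uniqueness clause fails and this is not a chain of any degree. Taking the empty prefix, this element of your $X_9$ admits neither shrink (the image leaves $X_n$) nor grow (there is nothing to pull in), so your map is not even total, let alone a fixed-point-free sign-reversing involution. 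A further unresolved point is that your tie-breaking rule ``shrink if the shrunk pair lies in $X_n$, otherwise grow'' does not by itself give an involution: after a shrink the image may again admit a shrink, and ``the appropriate final segment'' in a grow may be ambiguous; the asserted mutual inverseness is exactly what needs proof.

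The paper avoids all of this by putting the chain on the left and the $P$-avoiding part on the right, so that the chain's tail abuts the bar. Its involution either moves the tail across the bar (when $\tau\lambda$ avoids $P$) or extends the chain up to the leftmost occurrence of a pattern in $\tau\lambda$; these two cases are complementary, and the fact that applying the map twice returns the original element follows directly from the uniqueness clause in the chain definition together with the antichain hypothesis on $P$ (no occurrence can start earlier and swallow the linking occurrence). If you re-orient your factorization as (chain)$\mid$(avoider) and replace your shrink/grow by these tail moves, your outline becomes the paper's proof; as written, the left-end moves do not interact correctly with the right-anchored chain structure.
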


\begin{proof}
Let $M_{n,k}$ be the set of all permutations $\gamma\in\Sigma_n$ decomposed as a concatenation $\sigma\lambda$ where $\st(\sigma)$ is a $k$-chain, and $\lambda$ is avoiding all patterns from~$P$. We put $M=\bigsqcup_{n,k}M_{n,k}$. We shall distinguish different factorisations of the same $\gamma$, so we adopt the notation $\sigma\mid\lambda$ with a bar between the factors for elements of~$M$. 

We define an involution $I\colon M\to M$ as follows. We have $\sigma=\sigma'\tau$, where $\tau$ is the tail of $\sigma$, and there are two possibilities: either $\tau\lambda$ is avoiding all patterns from $P$, or $\tau\lambda=\tau'\pi\lambda'$, where $\st(\pi)\in P$, and $\pi$ is the leftmost occurrence of a pattern from~$P$ in $\tau\lambda$. In the first case, we put $I(\sigma\mid\lambda)=\sigma'\mid\tau\lambda$. In the second case, we put $I(\sigma\mid\lambda)=\sigma'\tau'\pi\mid\lambda'$. Informally, if it is possible to move the tail of $\sigma$ through the bar without creating occurrences of forbidden patterns, we do it, and otherwise we extend $\sigma$ to a $(k+1)$-chain using some elements in the beginning of $\lambda$. It is obvious that $I$ is an involution. Also, it is clear that it changes the parity of the parameter $k$. Thus, for each $n\ge1$, the $n^\text{th}$ coefficient of $A(t)C(t)$, that is 
 $$
\sum_n\binom{n}{p}c_pa_{n-p}=\sum_{n,k}(-1)^k\binom{n}{p}c_{p,k}a_{n-p},
 $$
is equal to~$0$ (which implies the formula~\eqref{master}). Indeed, the number $\binom{n}{p}c_{p,k}a_{n-p}$ is the number of elements in $\calM_{n,k}$ for which $\st(\sigma)$ is a permutation of~$p$ elements. The above sum of these number with appropriate signs compute the difference between the number of such elements where $k$ is even and the number of such $\gamma$'s where~$k$ is odd. However, our involution establishes a bijection between these two subsets, so this difference is equal to~$0$, as required. 
\end{proof}

\section{Examples}\label{examples}

Before moving on to particular results, let us state a general remark. Our results suggest that the class of power series that contains all inverses of pattern avoidance enumerators is related to some nice combinatorics. Results of Elizalde and Noy~\cite{EN} that we re-prove in Section~\ref{app-cluster} describe some of these series as solutions to particular differential equations. Our formulas for other cases we considered can be rewritten as more complicated functional equations. What can be said about other series of that sort? From our approach it is always possible to derive recurrence relations, provided some additional statistics are taken into account. However, so far we were not able to describe a reasonable class of series that cover all of these. For example, a wild guess is that all these series satisfy algebraic differential equations, that is, if $f(x)$ is such a series, then $P(x,f(x),f'(x),\ldots,f^{(n)}(x))=0$ for some polynomial~$P$. 

\subsection{Applications of the cluster method}\label{app-cluster}

\subsubsection{Patterns without self-overlaps, unlabelled clusters, and posets}

There are fairly many situations where clusters are the same as chains. In this section, we consider some of these situations, namely, the case of an arbitrary pattern without self-overlaps, the case of one pattern of length~$4$, and the pair of patterns $(132,231)$.

\begin{definition}
A pattern $\tau\in\Sigma_m$ is said to have no self-overlaps if every permutation of length at most $2m-2$ has at most one occurrence of~$\tau$. (Clearly, there always exist permutations of length~$2m-1$ with two occurrences of $\tau$.)
\end{definition}

For example, the pattern $132$ is of that form: clearly, we can only link it with itself using the last entry. A more general example studied in~\cite{EN} is $12\ldots a\ \tau\ (a+1)\in\Sigma_n$, where $a+1<n$, and $\tau$ is an arbitrary permutation of the numbers $a+2,\ldots,n$. 

For a pattern~$\tau$ without self-overlaps, there exists a simple way to reformulate the enumeration problem for clusters in terms of total orderings on posets. The first author used this method in~\cite{DVJ} in a similar setting, dealing with tree monomials in the free shuffle operad. Let form an ``unlabelled cluster'' of the shape that we expect, namely, replace temporarily each entry in the expected cluster by the symbol~$\bullet$ (a bullet). For example, for the pattern~$1243$ we get
 $$
\underbrace{\bullet\,\,\bullet\,\,\bullet\,\,\,
\makebox[0pt][l]{$\overbrace{\phantom{\bullet\,\,\bullet\,\,\bullet\,\,\bullet\,\,}}$}\bullet}\,\,\bullet\,\,\bullet\,\,
\underbrace{\bullet\,\,\bullet\,\,\bullet\,\,\bullet}.
 $$ 
For such an unlabelled cluster $\gamma$, let us define a partial ordering on the set of bullets of $\gamma$ as follows: for each $j$, we equip the $j^\text{th}$ bullet pattern with a total ordering inherited from the real pattern~$\tau$. Let us denote by $\Pi_\gamma$ the thus defined poset.

\begin{example}
Let us take the bullet pattern above, and replace bullet by letters, to make it easier to distinguish between the different bullets: 
 $$
\underbrace{a\,\,b\,\,c\,\,\,
\makebox[0pt][l]{$\overbrace{\phantom{d\,\,e\,\,f\,\,g\,\,}}$}d}\,\,e\,\,f\,\,
\underbrace{g\,\,h\,\,i\,\,j}.
 $$ 
Then the orderings inherited from~$1243$ are $a<b<d<c$, $d<e<g<f$, and $g<h<i<j$, so we obtain the poset
 $$ 
\xygraph{!{<0mm,0mm>;<1.5mm,0mm>:<0mm,1.5mm>::}
!{(47.5263,31.7229)*+{a}}="0"
!{(47.5263,36.4093)*+{b}}="1"
!{(47.5263,41.0957)*+{d}}="2"
!{(45.6784,45.7822)*+{c}}="3"
!{(49.3742,45.7822)*+{e}}="4"
!{(50.8250,50.4686)*+{g}}="5"
!{(48.8709,56.1550)*+{f}}="6"
!{(52.8792,55.1550)*+{h}}="7"
!{(54.1106,59.8415)*+{i}}="8"
!{(55.2824,65.6279)*+{j}}="9"
"0"-"1"
"1"-"2"
"2"-"3"
"2"-"4"
"4"-"5"
"5"-"6"
"5"-"7"
"7"-"8"
"8"-"9"
}
 $$
(the covering relation of the poset is, as usual, represented by edges; $v$ is covered by $w$ if $w$ is the top vertex of the corresponding edge).
\end{example}

The following proposition is obvious. 
\begin{proposition}
The set of $k$-clusters for $P=\{\tau\}$, where $\tau$ has no self-overlaps is in one-to-one correspondence with the set of all total orderings on posets $\Pi_\gamma$ for unlabelled $k$-clusters $\gamma$. 
\end{proposition}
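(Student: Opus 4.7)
The plan is to make precise the natural identification suggested by the example: once the shape of a cluster is fixed, specifying a $k$-cluster of that shape amounts to choosing a linear extension of the induced partial order on its bullets.

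The first step will be to pin down the unlabelled shape. If $\tau$ has length $m$ and is self-overlap-free, and two consecutive occurrences of $\tau$ in a $k$-cluster overlap in $\ell \ge 2$ positions, then the ambient permutation on those two tiles has length $2m - \ell \le 2m - 2$ and contains two occurrences of $\tau$, contradicting the hypothesis. Consequently, consecutive tiles in any $k$-cluster must share exactly one position, and the unlabelled shape is uniquely determined by $k$: a string of $k$ copies of $\tau$ with one shared endpoint between each pair of neighbours, of total length $k(m-1)+1$. Denote this unique shape by $\gamma_k$; then there is only one poset $\Pi_{\gamma_k}$ to consider in this setting.

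Next, I would build the forward map. Given a $k$-cluster $(\sigma,\pi_1,\ldots,\pi_k,i_1,\ldots,i_k)$ with each $\pi_j = \tau$, identify positions of $\sigma$ with bullets of $\gamma_k$. The values of $\sigma$ yield a total order on the set of bullets, and the cluster condition $\st(\sigma_{i_j}\cdots\sigma_{i_j+m-1}) = \tau$ guarantees that, on each tile, this total order restricts to the order on that tile's bullets inherited from $\tau$. Hence it extends all the covering relations defining $\Pi_{\gamma_k}$, and is a linear extension. Conversely, from a linear extension of $\Pi_{\gamma_k}$, reconstruct $\sigma$ by taking $\sigma_p$ to be the rank of the $p$-th bullet; on each tile the resulting values standardize to $\tau$ by construction, so $\sigma$ is a $k$-cluster of shape $\gamma_k$.

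The two maps are manifestly mutually inverse, so the only real content is the rigidity of the shape $\gamma_k$ that I establish in the first step; everything else is a direct translation between a labelled filling and a linear extension of the induced poset. I do not expect a genuine obstacle here, which matches the proposition being labelled \emph{obvious}; the value of the statement lies less in the proof than in reducing cluster enumeration to counting linear extensions of a concrete family of posets.
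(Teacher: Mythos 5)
Your proof is correct: the paper simply declares this proposition obvious and gives no argument, and what you write out --- rigidity of the unlabelled shape (one-element overlaps forced by the no-self-overlap condition) plus the direct dictionary between labellings of the shape and linear extensions of $\Pi_\gamma$ --- is precisely the intended justification.
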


Now we shall see how this approach can be applied in some cases. 

\subsubsection{Case of the pattern~$12\ldots a\ \tau\ (a+1)$}

Let $a<m$, and let $12\ldots a\ \tau\ (a+1)$ be a permutation in $\Sigma_{m+1}$ which starts from the rise $1,2,\ldots,a$, followed by some permutation~$\tau$ of $(a+2),\ldots,m+1$, followed by the number~$(a+1)$. Clearly, this pattern has no self-overlaps, so to enumerate clusters we may count total orderings of posets. Note that every $k$-cluster for $k\ge1$ is of length $k(m+1)-(k-1)=km+1$. 

\begin{proposition}
For $P=\{12\ldots a\ \tau\ (a+1)\}$, the number of~$k$-clusters is equal to
 $$
\prod_{j=1}^k\binom{jm-a}{m-a}.
 $$
\end{proposition}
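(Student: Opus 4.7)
The plan is to invoke the preceding proposition, which, for a pattern without self-overlaps, identifies $k$-clusters with total orderings (linear extensions) of the poset $\Pi_\gamma$ attached to each unlabelled $k$-cluster $\gamma$. For the pattern $12\ldots a\,\tau\,(a+1)$, two copies can overlap only at a single position---the last of one with the first of the next---because the last entry of the pattern has relative rank $a+1$ and its first entry has rank $1$; hence the unlabelled $k$-cluster on $km+1$ bullets is unique, and the problem reduces to counting linear extensions of one explicit poset.

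Next I would describe $\Pi_\gamma$ concretely: each copy of the pattern contributes a chain $c_j^{(1)}<c_j^{(2)}<\ldots<c_j^{(m+1)}$, and the identifications $c_j^{(a+1)}=c_{j+1}^{(1)}$ glue successive chains at their joints. A direct count of vertices and covering relations shows that the Hasse diagram has $km+1$ vertices and $km$ edges and is connected, so $\Pi_\gamma$ is a tree rooted at its unique minimum $c_1^{(1)}$. Topologically it splits as a ``spine'' passing through all $k-1$ joints, together with $k$ side branches $c_j^{(a+2)}<\ldots<c_j^{(m+1)}$, each of length $m-a$, sprouting upward from $c_j^{(a+1)}$.

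Finally I would apply the hook-length formula for linear extensions of a rooted tree:
\[
\#L(\Pi_\gamma)=\frac{(km+1)!}{\prod_v h(v)},
\]
where $h(v)$ is the size of the filter $\{u:u\ge v\}$. The $k$ side branches contribute $[(m-a)!]^k$ in total, since each is a chain of length $m-a$ with subtree sizes $1,2,\ldots,m-a$. For the spine, the $a+1$ bottom vertices (those of chain~$1$) have sizes $km+1, km, \ldots, km-a+1$, and for each $j=2,\ldots,k$ the $a$ new spine vertices contributed by chain $j$ have sizes $(k-j+1)m, (k-j+1)m-1, \ldots, (k-j+1)m-a+1$. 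Assembling these factors, the $(km+1)!$ telescopes away and yields
\[
\#L(\Pi_\gamma)=\frac{\prod_{j=1}^{k}(jm-a)!}{[(m-a)!]^k\prod_{j=1}^{k-1}(jm)!}=\prod_{j=1}^{k}\binom{jm-a}{m-a}.
\]
The main care is required in the bookkeeping for the subtree sizes at the joint vertices, whose filters bundle the adjacent side branch together with the remainder of the spine and all subsequent branches; once those sizes are tabulated correctly, the concluding simplification is routine telescoping.
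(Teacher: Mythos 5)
Your proof is correct, and I verified the bookkeeping: the filter sizes you list (the $a+1$ bottom vertices with sizes $km+1,\dots,km-a+1$; for each $j\ge 2$ the $a$ new spine vertices with sizes $(k-j+1)m,\dots,(k-j+1)m-a+1$; and $(m-a)!$ from each of the $k$ branches) are right, and the telescoping does produce $\prod_{j=1}^{k}\binom{jm-a}{m-a}$; a spot check with $a=2$, $m=3$, $k=2$ gives $7!/(7\cdot6\cdot5\cdot3\cdot2)=4=\binom{4}{1}\binom{1}{1}$. The route differs from the paper's only in the final counting step. The paper uses the same reduction (the pattern has no self-overlaps, so there is a unique unlabelled $k$-cluster, and clusters correspond to linear extensions of the resulting ``caterpillar'' poset), but then counts linear extensions by a direct induction: the lowest $a+1$ elements must receive the labels $1,\dots,a+1$, there are $\binom{km-a}{m-a}$ ways to choose the labels of the $m-a$ elements of the first branch, and the rest is the same problem for the $(k-1)$-cluster poset, which produces the product one factor at a time. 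Your appeal to the hook-length formula for rooted trees replaces that induction by a single closed-form computation; it is less self-contained (it invokes Knuth's tree hook-length theorem, and one must be careful that the poset really is a tree with unique lower covers and that the hooks are the filters $\{u\ge v\}$), but it is arguably more systematic and would adapt mechanically to other non-self-overlapping patterns whose cluster posets are forests. One small remark: your one-line justification that two copies can only overlap in a single position (first entry has rank $1$, last has rank $a+1$) is the right idea but is stated rather tersely; a complete argument compares, for a putative overlap of length $r\ge 2$, the rank of the last entry in the prefix and in the suffix of length $r$ (they are $\ge r\wedge(a+1)$ and $\le a$ respectively, and can only coincide when $m=a$). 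Since the paper itself dismisses this with ``clearly,'' this does not count as a gap.
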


\begin{proof}
This proof serves us as a starting example of how to use posets to study clusters. The poset which we need to enumerate $k$-clusters in this case looks like a tree of height $m+1$ with the only branch growing on the height~$a+1$, this branch being of length~$m+1$ and having a smaller branch growing at the distance $a+1$ from the starting point etc. (An example of such a poset for the case of the permutation~$1243$ with $a=2$, $m=3$ is given above.) To extend such a partial ordering to a total ordering, we should make the lowest $a+1$ elements for such a tree the smallest elements $1,2,\ldots,a+1$ of the resulting ordering. Then, there are $\binom{km-a}{m-a}$ ways to choose $m-a$ remaining elements forming the stem of our tree, and we are left with the same question for a smaller tree, where we may proceed by induction.
\end{proof}

\begin{corollary}[see \cite{EN,KitPOP} for $t=0$]\label{1non}
For $a<m$, the multiplicative inverse of the generating function $\Pi$ counting occurrences of $12\ldots a\ \tau\ (a+1)\in S_{m+1}$ is given by the formula
\begin{equation}\label{EN}
1-x-\sum_{k\ge1}\frac{(t-1)^{k} x^{km+1}}{(km+1)!}\prod_{j=1}^k \binom{jm-a}{m-a}. 
\end{equation}
In particular, all these patterns, for different~$\tau$, are equivalent to each other.
\end{corollary}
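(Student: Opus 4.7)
The plan is to combine the cluster enumeration formula \eqref{enum_cluster} with the cluster count supplied by the preceding proposition.

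First, recall that by the cluster method theorem we have
$$\Pi(x,t)=\frac{1}{1-x-\Cl(x,t-1)},$$
so the multiplicative inverse of $\Pi(x,t)$ is $1-x-\Cl(x,t-1)$. It therefore suffices to identify $\Cl(x,t-1)$ with the series on the right-hand side of \eqref{EN} after subtracting $1-x$.

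Next, I would invoke the structural observation already noted: the pattern $12\ldots a\ \tau\ (a+1)$ has no self-overlaps, so any two consecutive patterns in a cluster overlap in exactly one letter, and hence a $k$-cluster has length exactly $km+1$. Consequently $\cl_{n,q}$ vanishes unless $n=qm+1$, and in that case the preceding proposition gives
$$\cl_{qm+1,\,q}=\prod_{j=1}^q\binom{jm-a}{m-a}.$$
Substituting this into the definition of $\Cl(x,t)$ yields
$$\Cl(x,t)=\sum_{k\ge1}\prod_{j=1}^k\binom{jm-a}{m-a}\,\frac{x^{km+1}}{(km+1)!}\,t^k,$$
and replacing $t$ by $t-1$ gives exactly the subtracted expression in \eqref{EN}.

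For the last sentence (equivalence of all patterns of this form), I would simply observe that neither the length $km+1$ of the clusters, nor the cluster count $\prod_{j=1}^k\binom{jm-a}{m-a}$, depends on the choice of the inner permutation $\tau$. Hence $\Pi(x,t)$ is the same series for all such patterns, which is exactly the definition of $\simeq$. No real obstacle is expected here: the cluster count from the previous proposition does all the combinatorial work, and this corollary is a direct substitution into \eqref{enum_cluster}.
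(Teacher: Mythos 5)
Your proposal is correct and matches the paper's intended argument: the corollary is a direct consequence of substituting the cluster count $\cl_{km+1,k}=\prod_{j=1}^k\binom{jm-a}{m-a}$ (with all other $\cl_{n,q}$ vanishing since non-self-overlapping $k$-clusters have length exactly $km+1$) into the cluster-method formula~\eqref{enum_cluster}, with the $\tau$-independence of the cluster data giving the equivalence claim. Nothing further is needed.
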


Except for the case of the pattern $123\simeq 321$, this covers all patterns of length~$3$, because $132\simeq 312\simeq 231\simeq 213$. We shall deal with the pattern~$123$ and, more generally, $12\ldots a$, in section~\ref{app-chain}.

\subsubsection{Case of one arbitrary pattern without self-overlaps}

Generalizing the previous result, let us consider an arbitrary pattern $\tau\in S_{m+1}$ without self-overlaps. Then every $k$-cluster for $k\ge1$ is of length~$km+1$. The following result was conjectured in~\cite{Elizalde}, where it was proved in some particular cases. Another proof in the general case was, as we were informed by Sergey Kitaev, obtained by  Jeffrey Remmel, the proof being based on methods developed in~\cite{MR}.

\begin{theorem}\label{wilf}
For a pattern $\tau\in S_{m+1}$ without nontrivial self-overlaps, the number of permutations of length~$n$ with $k$ occurrences of $\tau$ depends only on $m$, $\tau(1)$, and $\tau(m+1)$. In other words, two non-self-overlapping permutations in $S_{m+1}$ are equivalent if their first and last entries are the same.
\end{theorem}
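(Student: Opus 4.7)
The plan is to apply Theorem~1 so that the statement reduces to a claim about clusters: by formula~\eqref{enum_cluster} the generating function $\Pi(x,t)$ is a universal expression in $\Cl(x,t-1)$, so it suffices to show that $\Cl(x,t)$ depends only on $m$, $\tau(1)$, and $\tau(m+1)$; the joint counts $\pi_{n,k}$ then automatically depend only on these three parameters, which is the theorem.

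The non-self-overlap hypothesis forces a very rigid shape on all $k$-clusters for $P=\{\tau\}$. Two overlapping occurrences of $\tau$ can coexist only inside a permutation of length at least $2m+1$, so their overlap is at most one position; inside a cluster consecutive occurrences must be linked, so their overlap is \emph{exactly} one position, namely the last entry of the earlier occurrence coincides with the first entry of the later one. Thus every $k$-cluster has length $k(m+1)-(k-1)=km+1$, and $\Cl(x,t)$ is encoded by the single sequence $\cl_k:=\cl_{km+1,k}$ for $k\geq 1$. I would then invoke the preceding proposition: the $k$-clusters are in bijection with the linear extensions of a single poset $\Pi_k$ on $km+1$ elements, built by taking $k$ disjoint chains of length $m+1$ (each carrying the total order induced by the values of $\tau$) and, for $i=1,\dots,k-1$, identifying the right end of the $i$-th chain with the left end of the $(i+1)$-th.

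The crux of the argument is the observation that this construction retains only three pieces of information about $\tau$: the common chain length $m+1$, the rank of the identified element inside the lower chain (which is $\tau(m+1)$), and the rank of the identified element inside the upper chain (which is $\tau(1)$). Everything else about the interior of $\tau$ is washed away, since within each chain the partial order is total and the gluing uses only the two end positions. Therefore $\Pi_k$, and hence $\cl_k$, depends only on $m$, $\tau(1)$, and $\tau(m+1)$, and the theorem follows. I expect the only delicate point to be a careful verification that under the non-self-overlapping hypothesis the cluster shape is really this rigid and that the induced partial order on bullets genuinely factors through the data $(m,\tau(1),\tau(m+1))$; once that combinatorial reduction is secured, the rest is a direct application of Theorem~1.
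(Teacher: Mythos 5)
Your proposal is correct and follows essentially the same route as the paper: reduce via the Goulden--Jackson formula to cluster counts, use the non-self-overlap hypothesis to force one-element overlaps (so every $k$-cluster has length $km+1$ and a rigid shape), and identify $k$-clusters with linear extensions of the poset obtained by gluing $k$ chains of size $m+1$, where the glued element has rank $\tau(m+1)$ in the earlier chain and $\tau(1)$ in the later one, so the poset depends only on $m$, $\tau(1)$, $\tau(m+1)$. The paper's proof adds only a recurrence~\eqref{recur} for computing the cluster numbers, which is not needed for the equivalence statement itself.
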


\begin{proof}
This result is also very easy to derive using posets. To make formulas compact, let us put $a=\tau(1)-1$ and $b=\tau(m+1)-1$.
The poset whose total orderings enumerate $k$-clusters is obtained from $k$ totally ordered sets of cardinality~$m+1$ as follows: the element $a+1$ of the second set is identified with the element~$b+1$ of the first set, the element $a+1$ of the third set is identified with the element $b+1$ of the second set, etc. Clearly, this poset depends only on $m$, $a$, and $b$. 

The actual number of $k$-clusters in this case can be computed as follows. Let us denote by $f_k(p)$ the number of $k$-clusters $\sigma$ with $\sigma(1)=p+1$. Then it is easy to see that the following recurrence relation holds (here we assume, without the loss of generality, that $a<b$):
\begin{equation}\label{recur}
f_k(p)=\sum_{q}\binom{p}{a}\binom{km-q}{m-b}\binom{q-p-1}{b-a-1}f_{k-1}(q-b). 
\end{equation}
Indeed, if we denote $q+1=\sigma(m+1)$, there are $\binom{p}{a}$ ways to choose elements less than~$\sigma(1)$ in the first pattern in the cluster, $\binom{km-q}{m-b}$ ways to choose elements greater than $\sigma(m+1)$ there, $\binom{q-p-1}{b-a-1}$ to fill the space between these elements, and $f_{k-1}(q-b)$ ways to choose the remaining $(k-1)$-cluster. 
\end{proof}

\begin{example}
Theorem~\ref{wilf} shows that two patterns $23154$ and $21534$ are equivalent to each other. Computing the first ten cluster numbers and inverting the corresponding series, we get the first ten entries $1$, $1$, $2$, $6$, $24$, $119$, $708$, $4914$, $38976$, $347776$ of the sequence counting permutations that avoid either of them. 
\end{example}

\subsubsection{Clusters and chains for patterns of length~$4$}

Let us now consider patterns of length~$4$. The equivalence classes of these are as follows (see~\cite{Elizalde}):
\begin{enumerate}
 \item[I.] $1234\simeq4321$
 \item[II.] $2413\simeq3142$
 \item[III.] $2143\simeq3412$
 \item[IV.] $1324\simeq4231$
 \item[V.] $1423\simeq3241\simeq4132\simeq2314$
 \item[VI.] $1342\simeq2431\simeq4213\simeq3124\simeq1432\simeq2341\simeq4123\simeq3214$
 \item[VII.] $1243\simeq3421\simeq4321\simeq2134$ 
\end{enumerate}

The case~I will be considered in Section~\ref{app-chain}. In each of the cases VI and VII, the pattern has no self-overlaps, so Corollary~\ref{1non} applies.  

A very special feature of all patterns of length~$4$ (except for the case~I) is that they only have self-overlaps of lengths~$1$ and~$2$, so patterns in a cluster overlap only if they are neighbours. Moreover, in this case chains actually coincide with clusters. Let us explain that. In fact, we shall show that even in the case of self-overlapping patterns, every labelling of an unlabelled chain that is compatible with ordering of each of the patterns gives a genuine chain. Let us show that for the pattern~$1324$, in other cases the proof is similar. Assume that there is a labelling of some unlabelled $m$-chain $c$ for which its proper beginning is a linkage of $m$ patterns as well, and that $m$ is the smallest possible integer for which it happens. Then, clearly, the first $(m-1)$ patterns in both $c$ and its beginning are the same, and the linkages of the $m^\text{th}$ pattern with the $(m-1)^\text{st}$ one differ. However, this would mean that the following unlabelled chain has a consistent labelling:
 $$
\underbrace{\bullet\,\,\bullet\,
\makebox[0pt][l]{$\overbrace{\phantom{\bullet\,\,\,\bullet\,\,\,\bullet\,\,\,\bullet\,\,}}$}\bullet\makebox[0pt][l]{$\overbrace{\vphantom{\overbrace{\phantom{\bullet\,\,\bullet\,\,\bullet\,\,\bullet\,\,}}}\phantom{\bullet\,\,\bullet\,\,\bullet\,\,\bullet\,\,}\bullet}$\,\,,}\,\,\bullet\,\,}\,\bullet\,\,\,\bullet
 $$ 
but for each labelling the orders of fourth and the fifth bullet coming from the second and the third bullet pattern contradict one another, which is impossible. 

\subsubsection{Case of the pattern~$1324$}

\begin{theorem}\label{1324}
The cluster numbers $c_{n,l}=\cl^{1324}_{n,l}$ for $1324$ satisfy the recurrence relations
\begin{equation}\label{rec-catalan}
c_{n,l}=\sum_{4\le 2k+2\le n}\frac{1}{k+1}\binom{2k}{k}c_{n-2k-1,l-k}  
\end{equation}
with initial conditions $c_{1,l}=\delta_{0,l}$, $c_{2,l}=0$, $c_{3,l}=0$.
Consequently, the generating function for occurrences of $1324$ is 
 $$
\left(1-x-\sum_{n\ge2,l\ge1}\frac{c_{n,l}x^n(t-1)^l}{n!}\right)^{-1}.
 $$
\end{theorem}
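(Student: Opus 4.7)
My approach is to factor every $1324$-cluster at its first ``overlap drop''. Since $1324$ admits self-overlaps only of lengths $1$ and $2$ (as noted in the paragraph preceding the theorem), every $l$-cluster decomposes uniquely as an initial ``$2$-block'' consisting of the first $k\ge 1$ patterns whose consecutive overlaps all equal $2$ (so $1\le k\le l$), followed, when $k<l$, by an overlap of size $1$ leading into a sub-cluster on the remaining $l-k$ patterns. The $2$-block occupies positions $1,\ldots,2k+2$, the sub-cluster occupies positions $2k+2,\ldots,n$ (sharing position $2k+2$), and its length is $n-2k-1$; when $k=l$ the sub-cluster degenerates to the single position $2k+2$, corresponding to $c_{1,0}=1$.

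The next step is to check that the splitting of values between the two parts is completely rigid. Within any $1324$-cluster the leftmost position carries the global minimum and the rightmost the global maximum: this follows by induction on the number of patterns, using that within each copy of $1324$ the first entry has rank $1$ and the last has rank $4$, while any overlap identifies this extremal entry of one pattern with a strictly non-extremal entry of its neighbour. Applied to the $2$-block (itself a $k$-cluster), position $2k+2$ is its maximum; applied to the sub-cluster, position $2k+2$ is its minimum. The value at position $2k+2$ is therefore forced to equal $2k+2$, the initial block carries the values $\{1,\ldots,2k+2\}$, the sub-cluster carries $\{2k+2,\ldots,n\}$, and the two parts can be enumerated independently, giving
\begin{equation*}
c_{n,l}=\sum_{k\ge 1} N_k\, c_{n-2k-1,\,l-k},
\end{equation*}
where $N_k$ is the number of linear extensions of the poset $\Pi_k$ attached to the $2$-block.

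The crux is to identify $N_k$ with the Catalan number $C_k=\frac{1}{k+1}\binom{2k}{k}$. Writing $s_j=2j-1$ for the $j$-th odd position and $t_j=2j$ for the $j$-th even position in $\{1,\ldots,2k+2\}$, the poset $\Pi_k$ is generated by the chains $s_1<\cdots<s_{k+1}$ and $t_1<\cdots<t_{k+1}$ together with the cross relations $s_{i+1}<t_i$ for $i=1,\ldots,k$, each arising from a single copy of $1324$ (whose rank sequence is $1,3,2,4$). The element $s_1$ is the unique minimum of $\Pi_k$ and $t_{k+1}$ the unique maximum, so they occupy the first and last slots of every linear extension. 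Striking them out and relabelling $s'_i:=s_{i+1}$, $t'_i:=t_i$ for $i=1,\ldots,k$ yields the two chains $s'_1<\cdots<s'_k$, $t'_1<\cdots<t'_k$ with the column relations $s'_i<t'_i$---exactly the incidence poset of a $2\times k$ rectangle. Its linear extensions are the standard Young tableaux of shape $(k,k)$, of which there are $C_k$ by the hook length formula.

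Plugging $N_k=C_k$ into the enumerative identity above yields the recurrence~\eqref{rec-catalan}; the initial conditions $c_{1,l}=\delta_{0,l}$, $c_{2,l}=c_{3,l}=0$ encode the conventional ``empty tail'' $c_{1,0}=1$ and the obvious impossibility of a $1324$-occurrence in a permutation of length $\le 3$. The generating-function consequence follows by substituting the resulting cluster polynomial into the Goulden--Jackson formula~\eqref{enum_cluster}. I expect the rigidity claim---that position $2k+2$ is simultaneously the $2$-block maximum and the sub-cluster minimum---to be the most delicate step, but once it is in place the remainder of the argument is combinatorial bookkeeping together with the classical count of standard Young tableaux of shape $(k,k)$.
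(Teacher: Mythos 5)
Your proof is correct and takes essentially the same route as the paper: split the cluster at the first overlap of size $1$, note that the initial block of $k$ patterns with two-element overlaps must carry the values $\{1,\ldots,2k+2\}$ and is counted by standard Young tableaux of shape $2\times k$ (the Catalan number $\tfrac{1}{k+1}\binom{2k}{k}$), whence the recurrence. Your induction showing that the leftmost and rightmost entries of a $1324$-cluster are its minimum and maximum merely makes explicit a step the paper asserts without detail.
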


\begin{proof}
Counting clusters is reduced to counting total orderings of the corresponding posets. Let us assume that the first $k+1$ patterns have two-element overlaps, and the following overlap involves just one element. For a cluster with $\sigma=a_1a_2\ldots a_{2k+1}a_{2k+2}a_{2k+3}\ldots$, this means that
\begin{equation}\label{catalan}
a_1<a_3<a_2<a_4, a_3<a_5<a_4<a_6, \ldots, a_{2k-1}<a_{2k+1}<a_{2k}<a_{2k+2}, 
\end{equation}
that $\{a_1,\ldots,a_{2k+2}\}=\{1,\ldots,2k+2\}$, and that $\st(a_{2k+2}a_{2k+3}\ldots)$ is an $(l-k)$-cluster. To prove~\eqref{rec-catalan}, we notice that the number of permutations $a_1a_2\ldots a_{2k+2}$ of $\{1,\ldots,2k+2\}$ for which the conditions~\eqref{catalan} are satisfied is given by the number of standard Young tableaux of size $2\times k$:  clearly, $a_1=1$, $a_{2k+2}=2k+2$, and 
 $$
a_2,a_3,a_4,\ldots,a_{2k+1} \quad\leftrightarrow\quad  
\begin{tabular}{|p{0.8cm}|p{0.8cm}|p{0.8cm}|p{0.8cm}|p{0.8cm}|}
\hline
$a_3$& $a_5$ &$a_7$ &\ldots&$a_{2k+1}$\\
\hline
$a_2$& $a_4$& $a_6$ &\ldots&$a_{2k}$\\
\hline
\end{tabular}
 $$
gives a bijection with standard Young tableaux. The number of such tableaux is equal to the Catalan number $\frac{1}{k+1}\binom{2k}{k}$ (see, for example~\cite{Stan}), and the recurrence relation~\eqref{rec-catalan} follows.
\end{proof}

\begin{example}
Computing the first ten cluster numbers and inverting the corresponding series, we get the first ten entries $1$, $1$, $2$, $6$, $23$, $110$, $632$, $4229$, $32337$, $278204$ of the sequence which is indeed counting permutations that avoid~$1324$ (A113228 in~\cite{oeis}). 
\end{example}

\subsubsection{Case of the pattern~$1423$}

\begin{theorem}\label{1423}
The cluster numbers $c_{n,l}=\cl^{1423}_{n,l}$ for $1423$ satisfy the recurrence relations
\begin{equation}\label{shuffling-rec}
c_{n,l}=\sum_{4\le 2k+2\le n}\binom{n-k-2}{k}c_{n-2k-1,l-k} 
\end{equation}
with initial conditions $c_{1,l}=\delta_{0,l}$, $c_{2,l}=0$, $c_{3,l}=0$. Consequently, 
the generating function for occurrences of $1423$ is 
 $$
\left(1-x-\sum_{n\ge2,l\ge1}\frac{c_{n,l}x^n(t-1)^l}{n!}\right)^{-1}.
 $$
\end{theorem}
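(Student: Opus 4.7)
The plan is to mirror the strategy used for Theorem~\ref{1324}: decompose a $1423$-cluster according to the maximal initial block of $k$ consecutive $2$-element overlaps, count the block arrangements, and attach a smaller cluster as a tail. The crucial simplification for $1423$ is that such a block is \emph{uniquely} ordered: pattern~$j$ in the block (at positions $2j-1,\ldots,2j+2$) imposes $a_{2j-1}<a_{2j+1}<a_{2j+2}<a_{2j}$, and chaining these constraints for $j=1,\ldots,k$ forces the total order
$$a_1<a_3<\cdots<a_{2k+1}<a_{2k+2}<a_{2k}<\cdots<a_4<a_2,$$
so no Catalan-type choices occur inside the block.

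Before decomposing, I would establish an auxiliary \emph{first-entry lemma}: in every $1423$-cluster on $\{1,\ldots,n\}$, the leftmost entry equals~$1$. The proof is by induction on $n$; the base case $n=4$ is immediate from the pattern. For the inductive step, peel off the initial pattern to obtain a shorter $1423$-cluster starting at position~$3$ (for a $2$-overlap) or position~$4$ (for a $1$-overlap); the induction hypothesis makes that tail's first entry $a_3$ (respectively $a_4$) the minimum of the tail, and then $a_1<a_3<a_4$ coming from the initial pattern yields that $a_1$ is the global minimum. I expect this lemma to be the main technical point---not for any difficulty in its proof, but because it has to be invoked twice below: once on the whole cluster and once on the remainder produced by the block decomposition.

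With the lemma in hand, the block decomposition is forced. The unique block ordering, combined with the lemma applied to the remainder (whose first entry is $a_{2k+2}$), makes the odd positions $1,3,\ldots,2k+1$ carry the cluster's smallest $k+1$ values, namely $1,2,\ldots,k+1$, and makes position $2k+2$ carry~$k+2$. The remaining $k$ entries of the block, at positions $2,4,\ldots,2k$, are therefore any $k$-subset of $\{k+3,\ldots,n\}$, arranged uniquely by the forced chain $a_{2k+2}<a_{2k}<\cdots<a_2$; this gives $\binom{n-k-2}{k}$ choices for the block. The tail is an $(l-k)$-cluster of length $n-2k-1$ whose standardization may be any permutation counted by $c_{n-2k-1,l-k}$, the compatibility requirement that its standardized first entry equal~$1$ being automatic by the lemma. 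Summing over $k\ge 1$ with $2k+2\le n$ then yields the recurrence~\eqref{shuffling-rec}, and the generating function formula follows from~\eqref{enum_cluster} upon identifying chains with clusters, as in the preceding subsection.
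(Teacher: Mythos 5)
Your proof is correct and takes essentially the same route as the paper's: decompose the cluster at the first one-element overlap, use the forced total order $a_1<a_3<\cdots<a_{2k+1}<a_{2k+2}<a_{2k}<\cdots<a_2$ to conclude that the odd positions carry $1,\ldots,k+1$ and $a_{2k+2}=k+2$, choose the remaining $k$ block values in $\binom{n-k-2}{k}$ ways, and attach an arbitrary $(l-k)$-cluster, with the generating function coming from the cluster method. Your explicit first-entry lemma merely spells out what the paper treats as immediate from the poset description, so it is a welcome clarification rather than a different argument.
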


\begin{proof}
Similarly to the proof of Theorem~\ref{1324}, counting clusters is reduced to counting total orderings of the corresponding posets.
Let us assume that the first $k+1$ patterns have two-element overlaps, and the following overlap involves just one element. For a cluster with $\sigma=a_1a_2\ldots a_{2k+1}a_{2k+2}a_{2k+3}\ldots$, this means that
\begin{equation}\label{shuffling1}
a_1<a_3<a_4<a_2, a_3<a_5<a_6<a_4, \ldots, a_{2k-1}<a_{2k+1}<a_{2k+2}<a_{2k}, 
\end{equation}
so 
\begin{equation}\label{shuffling}
a_1<a_3<\ldots<a_{2k-1}<a_{2k+1}<a_{2k+2}<a_{2k}<\ldots<a_4<a_2, 
\end{equation}
$\{a_1,a_3,\ldots,a_{2k+1}\}=\{1,2,\ldots,k+1\}$, $a_{2k+2}=k+2$,  and $\st(a_{2k+2}a_{2k+3}\ldots)$ is an $(l-k)$-cluster. To prove~\eqref{shuffling-rec}, we notice that the number of way to distribute numbers between the increasing sequence~\eqref{shuffling} and the $(l-k)$-chain $\st(a_{2k+2}a_{2k+3}\ldots)$ is equal to the number of way to choose the $k$ numbers  
$a_{2k},\ldots,a_2$. The latter is clearly the binomial coefficient $\binom{n-k-2}{k}$, and the recurrence relation~\eqref{shuffling-rec} follows.
\end{proof}

\begin{example}
Computing the first ten cluster numbers and inverting the corresponding series, we get the first ten entries $1$, $1$, $2$, $6$, $23$, $110$, $631$, $4218$, $32221$, $276896$ of the sequence counting permutations that avoid~$1423$. 
\end{example}

\subsubsection{Case of the pattern~$2143$}

\begin{theorem}\label{2143}
The cluster numbers $c_{n,l}=\cl^{2143}_{n,l}$ for $2143$ are given by the formula
 $$
 c_{n,l}=\sum_{2\le p<n-2}c_{n,l}(p),
 $$
where the numbers $c_{n,l}(p)$ satisfy the recurrence relations
\begin{equation}\label{2143-rec}
c_{n,l}(p)=\sum_{4\le2k+2\le q\le n}\binom{q-p-1}{2k-2}(p-1)(n-q)c_{n-2k-1,l-k}(q-2k).  
\end{equation}
with initial conditions $c_{1,l}(p)=\delta_{0,l}\delta_{1,p}$, $c_{2,l}(p)=0$, $c_{3,l}(p)=0$. 
Consequently, the generating function for occurrences of $2143$ is 
 $$
\left(1-x-\sum_{n\ge2,l\ge1}\frac{c_{n,l}x^n(t-1)^l}{n!}\right)^{-1}.
 $$
\end{theorem}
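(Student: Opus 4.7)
The plan is to adapt the strategy of Theorems~\ref{1324} and~\ref{1423}: decompose an $l$-cluster by the length of its initial block of $2$-element overlaps. Let $k \geq 1$ be the number of consecutive patterns at the start linked by $2$-overlaps; these occupy the first $2k+2$ positions. For the pattern $2143$, the $2$-overlap relations $a_{2j} < a_{2j-1} < a_{2j+2} < a_{2j+1}$ (for $j = 1, \ldots, k$) iterate to the \emph{total} order
\[
a_2 < a_1 < a_4 < a_3 < a_6 < a_5 < \cdots < a_{2k+2} < a_{2k+1},
\]
so if we label the block values in increasing order as $v_1 < v_2 < \cdots < v_{2k+2}$, the assignments $(a_{2j}, a_{2j-1}) = (v_{2j-1}, v_{2j})$ are determined; in particular $a_1 = v_2$ and $a_{2k+2} = v_{2k+1}$.

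Unlike in Theorems~\ref{1324} and~\ref{1423}, a refined count $c_{n,l}(p)$ is genuinely needed: here $\sigma(1) = a_1$ plays the role ``$2$'' in its pattern, so it is not the minimum, and the $1$-overlap that follows the block forces $a_{2k+3} < a_{2k+2}$, meaning the sub-cluster can contain values smaller than some block values. I parametrize by $p = a_1 = v_2$ and $q = a_{2k+2} = v_{2k+1}$, and count block configurations as follows: there are $p-1$ choices for $v_1 \in \{1, \ldots, p-1\}$, there are $n-q$ choices for $v_{2k+2} \in \{q+1, \ldots, n\}$, and there are $\binom{q-p-1}{2k-2}$ choices for the intermediate values $v_3, \ldots, v_{2k}$ as a subset of $\{p+1, \ldots, q-1\}$. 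Multiplying gives the block factor $(p-1)(n-q)\binom{q-p-1}{2k-2}$.

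For the remaining positions $2k+2, \ldots, n$, standardization yields an $(l-k)$-cluster of length $n-2k-1$ whose first element is $a_{2k+2} = q$; exactly $2k$ block values (namely $v_1, v_2, \ldots, v_{2k}$) are strictly less than $q$, so the standardized first element has value $q - 2k$, contributing $c_{n-2k-1, l-k}(q-2k)$. Summing over $k \geq 1$ and admissible $q$ yields the stated recurrence; the pure-block case $l = k$ is subsumed via the trivial one-element sub-cluster $c_{1,0}(1) = 1$, which forces $q = 2k+1$, and jointly with the binomial coefficient forces $p = 2$, $v_1 = 1$, $v_{2k+2} = n$, reproducing the unique pure-block cluster. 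The generating-function identity then follows from the cluster inversion formula~\eqref{avoid_cluster} applied to $c_{n,l} = \sum_p c_{n,l}(p)$. The main thing to verify is that the data $(p, q, k, v_1, v_{2k+2}, \{v_3, \ldots, v_{2k}\}, \text{standardized sub-cluster})$ reconstructs the cluster bijectively: this holds because the block values are placed by the total order above, and the complement $\{1, \ldots, n\} \setminus \{v_1, p, v_3, \ldots, v_{2k}, v_{2k+2}\}$ is precisely the set of actual values appearing at positions $2k+2, \ldots, n$.
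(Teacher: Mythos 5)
Your proof is correct and follows essentially the same route as the paper's: decompose each cluster at its first one-element overlap, use the total order forced on the initial block of two-element overlaps to get the factor $\binom{q-p-1}{2k-2}(p-1)(n-q)$, and recurse on the refined statistic $c_{n-2k-1,l-k}(q-2k)$ tracking the first entry. You in fact spell out two points the paper leaves implicit, namely the bijectivity of the reconstruction and the pure-block case absorbed by the initial condition $c_{1,0}(1)=1$.
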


\begin{proof}
Similarly to the proof of Theorem~\ref{1324}, counting clusters is reduced to counting total orderings of the corresponding posets. Let $c_{n,l}(p)$ be the number of $l$-clusters for which $\sigma$ is a permutation of length~$n$ with $\sigma(1)=p$.
Let us assume that the first $k+1$ patterns in our cluster have two-element overlaps, and the following overlap involves just one element. For a cluster with $\sigma=a_1a_2\ldots a_{2k+1}a_{2k+2}a_{2k+3}\ldots$, this means that
\begin{equation}\label{monotone1}
a_2<a_1<a_4<a_3, a_4<a_3<a_6<a_5, \ldots, a_{2k}<a_{2k-1}<a_{2k+2}<a_{2k+1}, 
\end{equation}
so 
\begin{equation}\label{monotone}
a_2<a_1<a_4<a_3\ldots<a_{2k}<a_{2k-1}<a_{2k+2}<a_{2k+1}, 
\end{equation}
and $\st(a_{2k+2}a_{2k+3}\ldots)$ is an $(l-k)$-cluster. Assume that $a_1=p$. To prove~\eqref{2143-rec}, we notice that if $a_{2k+2}=q$, then there are $\binom{q-p-1}{2k-2}$ ways to pick the numbers $a_3,\ldots,a_{2k}$, $p-1$ ways to pick~$a_2$, $(n-q)$ ways to pick $a_{2k+1}$, and $c_{n-2k-1,l-k}$ ways to pick the remaining $(l-k)$-cluster (where the entry $q$ is $(q-2k)^\text{th}$ biggest). This completes the proof.
\end{proof}

\begin{example}
Computing the first ten cluster numbers and inverting the corresponding series, we get the first ten entries $1$, $1$, $2$, $6$, $23$, $110$, $631$, $4223$, $32301$, $277962$ of the sequence counting permutations that avoid~$2143$. 
\end{example}

In the last remaining case (II in the list above), we have no trick like above that would simplify the computations, so we shall use the most general strategy for chain enumeration, which allows to compute the chain numbers rather fast (polynomially in~$n$) for all sets of forbidden patterns. There is an obvious similarity with the approach of Kitaev and Mansour in~\cite{KitManMult}. 

\subsubsection{Case of the pattern~$2413$}

\begin{theorem}\label{2413}
The cluster numbers $c_{n,l}=\cl^{2413}_{n,l}$ for $2413$ are given by the formula
 $$
 c_{n,l}=\sum_{1<p<q-1<n}c_{n,l}(p,q),
 $$
where the numbers $c_{n,l}(p,q)$ satisfy the recurrence relations
\begin{multline}\label{2413-rec}
c_{n,l}(p,q)=\sum_{r<p<s<q}c_{n-2,l-1}(r,s-1)+\\+\sum_{p<r<s<q}(p-1)c_{n-3,l-1}(r-1,s-1)+\sum_{p<r<q<s}(p-1)c_{n-3,l-1}(r-1,s-2).    
\end{multline}
with initial conditions $c_{2,l}(p,q)=0$, $c_{3,l}(p,q)=0$, $c_{4,l}(p,q)=\delta_{l,1}\delta_{p,2}\delta_{q,4}$. 
Consequently, the generating function for occurrences of $2143$ is 
 $$
\left(1-x-\sum_{n\ge2,l\ge1}\frac{c_{n,l}x^n(t-1)^l}{n!}\right)^{-1}.
 $$
\end{theorem}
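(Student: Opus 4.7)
The plan is to interpret $c_{n,l}(p,q)$ as the number of $l$-clusters $\sigma$ of length $n$ for the pattern $2413$ satisfying $\sigma_1 = p$ and $\sigma_2 = q$, and then derive the recurrence by case analysis on the overlap between the first two patterns of the cluster. The initial conditions are immediate: every cluster has length at least $4$, so $c_{2,l} = c_{3,l} = 0$; and the unique $1$-cluster of length $4$ is the pattern $2413$ itself, giving $c_{4,1}(2,4)=1$. The outer identity $c_{n,l} = \sum_{1<p<q-1<n} c_{n,l}(p,q)$ follows from the standardization constraint $\sigma_3 < \sigma_1 < \sigma_4 < \sigma_2$ on the first pattern, which forces $\sigma_1 \ge 2$ and $\sigma_2 \ge \sigma_1 + 2$.

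For the inductive step I would invoke the observation made earlier in the section for length-$4$ patterns: $2413$ has self-overlaps only of length $1$ and $2$. Hence in an $l$-cluster with $l \ge 2$ the second pattern begins either at $\sigma_3$ (two-element overlap) or at $\sigma_4$ (one-element overlap), and this binary split produces the three sums in the recurrence, with the two-element overlap contributing the first sum and the one-element overlap contributing the remaining two.

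In the two-element overlap case, removing $\sigma_1$ and $\sigma_2$ and standardizing yields an $(l-1)$-cluster of length $n-2$. Setting $r = \sigma_3$ and $s = \sigma_4$, the first-pattern constraint is $r < p < s < q$; the standardization fixes $r$ (no removed value lies below it) and shifts $s$ down by one (since $p$ lies below $s$), matching the argument $(r, s-1)$ of $c_{n-2, l-1}$ in the first sum. In the one-element overlap case, only $\sigma_4$ is shared with the next pattern, and $\sigma_3$ is internal to the first pattern; the constraint $\sigma_3 < p$ allows $\sigma_3$ to be freely chosen from $\{1,\dots,p-1\}$, producing the multiplicative factor $(p-1)$. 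After removing $\sigma_1, \sigma_2, \sigma_3$ one obtains an $(l-1)$-cluster of length $n-3$, and the rank shifts for $\sigma_4$ and $\sigma_5$ split into two sub-cases according to whether $\sigma_5 < q$ (giving the second sum) or $\sigma_5 > q$ (giving the third sum), because in the latter case an additional removed value ($q$) lies below $\sigma_5$.

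The main difficulty is the careful rank bookkeeping in the one-element overlap case: one must verify that the $(p-1)$ choices for $\sigma_3$ are genuinely independent of the remainder of the cluster, so that the factor is clean and multiplicative, and that the stated index shifts in the arguments of $c_{n-3, l-1}$ faithfully record the standardization after three specific values (whose positions depend on $p$, $q$, and the chosen $\sigma_3$) have been removed. Once the recurrence is established, the generating-function expression follows at once from the cluster-method identity~\eqref{enum_cluster}, exactly as in the preceding theorems of this section.
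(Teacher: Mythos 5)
Your outline follows the paper's own route: interpret $c_{n,l}(p,q)$ via $(\sigma_1,\sigma_2)=(p,q)$, split according to whether the second pattern of the cluster starts at position $3$ (two-element overlap) or position $4$ (one-element overlap), delete the entries belonging only to the first pattern, and standardize. The two-element case and the origin of the factor $(p-1)$ are handled correctly. But the step you explicitly defer --- the rank bookkeeping in the one-element case --- is precisely where the argument, aimed at the displayed recurrence, breaks down. With your identification $r=\sigma_4$, $s=\sigma_5$, the deleted values are $\sigma_3$, $p$ and $q$, and \emph{two} of them ($\sigma_3$ and $p$) lie below $\sigma_4$, so the standardized first entry of the remaining cluster is $r-2$, not $r-1$; likewise the second entry is $s-2$ when $s<q$ and $s-3$ when $s>q$. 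The bookkeeping therefore yields
\[
\textstyle\sum_{p<r<s<q}(p-1)\,c_{n-3,l-1}(r-2,s-2)\;+\;\sum_{p<r<q<s}(p-1)\,c_{n-3,l-1}(r-2,s-3),
\]
and it does not reproduce the second and third sums as printed; no natural re-reading of $r,s$ makes both the summation ranges and the arguments exact simultaneously.

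This is not cosmetic, because the recurrence together with the stated initial conditions is a purely formal definition and can be tested. For the $2$-clusters of length $7$ the two patterns share only $\sigma_4$, which is forced to equal $4$, with $3$ admissible fillings below and $3$ above, so $c_{7,2}=9$; the printed recurrence gives $c_{7,2}=4$. Feeding $9$ into the cluster inversion reproduces the avoidance numbers $632$, $4237$, $32465$ quoted in the paper's example, while $4$ gives $4232$ at length $7$. So you should either prove the corrected recurrence above (noting that the theorem's formula as displayed contains off-by-one misprints) or specify and justify a different meaning for $r,s$; as written, your proposal asserts a match at exactly the flagged step that in fact fails. Your other worry, that the $(p-1)$ choices of $\sigma_3$ must be independent of the remainder, does resolve cleanly: whether $p<\sigma_4<q$ holds is determined by $p$, $q$ and the standardized first entry alone, regardless of which value below $p$ is chosen for $\sigma_3$, so the factor is genuinely multiplicative.
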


\begin{proof}
This statement is straightforward. Indeed, let us consider an $n$-cluster with $\sigma=a_1a_2a_3\ldots$. The first pattern in that cluster intersects with its neighbour by either two or one elements. In the first case, we have $a_3<a_2<a_4<a_1$, so if we fix $a_1$ and $a_2$, and forget about them, we are left with an $(n-1)$-cluster, and we should sum over all choices of $a_3$ and $a_4$ for its first entries. If, on the contrary the first overlap uses just one element, then there are $(a_1-1)$ choices for $a_3$, and we should distinguish between the cases $a_5>a_2$ and $a_5<a_2$: in the first case $a_5$ is the $(a_5-1)^\text{st}$ biggest in the remaining cluster, while in the second case it is the $(a_5-2)^\text{nd}$ biggest.
\end{proof}

\begin{example}
Computing the first ten cluster numbers and inverting the corresponding series, we get the first ten entries $1$, $1$, $2$, $6$, $23$, $110$, $632$, $4237$, $32465$, $279828$ of the sequence counting permutations that avoid~$2413$. 
\end{example}

\subsubsection{Case of two patterns~$\{132,231\}$}

The following theorem is mentioned in~\cite{Elizalde}.

\begin{theorem}\label{132and231}
The cluster number $c_{n,l}=\cl^{132,231}_{n,l}$  is not equal to zero only for $n=2l+1$, and in this case is equal to $E_{2l+1}$, the tangent number~\cite{Stan}, so the generating function for occurrences of $\{132,231\}$ is 
\begin{equation}\label{tan}
\left(1-\frac{\tanh{x\sqrt{1-t}}}{\sqrt{1-t}}\right)^{-1}. 
\end{equation}
\end{theorem}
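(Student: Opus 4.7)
The plan is to analyse the shape of $l$-clusters for $P=\{132,231\}$, identify them with alternating permutations of odd length, and then convert the resulting tangent series into the claimed expression.

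First I would show that two consecutive patterns in a cluster can never overlap in two positions. Suppose the first pattern occupies positions $(1,2,3)$ and the second occupies $(2,3,4)$. For each of the four pairs of choices from $\{132,231\}$ for these two patterns, I would read off the inequalities imposed on $\sigma_2,\sigma_3$: the pattern at $(1,2,3)$ forces $\sigma_3<\sigma_2$ (since in both $132$ and $231$ the largest value sits in the middle of the window), while the pattern at $(2,3,4)$ forces $\sigma_2<\sigma_3$ for the same reason, producing a contradiction. Hence every overlap in an $l$-cluster has size exactly one; the patterns occupy positions $(2k-1,2k,2k+1)$ for $k=1,\ldots,l$; and the underlying permutation has length $n=2l+1$.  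In particular $c_{n,l}=0$ unless $n=2l+1$.

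Next I would extract the structure of the permutation $\sigma$ in such an $l$-cluster. Both $132$ and $231$ have their largest entry in the middle, so for every $k$ we obtain $\sigma_{2k-1}<\sigma_{2k}>\sigma_{2k+1}$, i.e.\ $\sigma$ is up-down alternating of length $2l+1$. Conversely, any alternating permutation $\sigma$ of length $2l+1$ gives back a unique $l$-cluster: the standardisation of each window $(\sigma_{2k-1},\sigma_{2k},\sigma_{2k+1})$ is automatically either $132$ or $231$ according to whether $\sigma_{2k-1}<\sigma_{2k+1}$ or not, and the list of patterns and positions is then forced. Thus $c_{2l+1,l}$ equals the number of up-down alternating permutations of odd length $2l+1$, which is the tangent number $E_{2l+1}$.

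Finally I would translate the enumeration into the claimed generating function. Starting from $\tanh(y)=\sum_{l\ge0}(-1)^{l}E_{2l+1}\,y^{2l+1}/(2l+1)!$ and substituting $y=x\sqrt{1-t}$ gives
$$
\frac{\tanh(x\sqrt{1-t})}{\sqrt{1-t}}=\sum_{l\ge0}E_{2l+1}\frac{x^{2l+1}(t-1)^{l}}{(2l+1)!}.
$$
Since the $l=0$ term equals $x$, we obtain $\Cl(x,t-1)=\dfrac{\tanh(x\sqrt{1-t})}{\sqrt{1-t}}-x$, and substitution into the cluster inversion formula~\eqref{enum_cluster} yields exactly~\eqref{tan}.

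The only nontrivial step is the case analysis that rules out overlaps of size two; everything else is then either a direct observation (local maxima at even positions) or a standard manipulation of the tangent series. The rigidity coming from the fact that both patterns have the same relative order on their middle entry is what makes the whole argument collapse to alternating permutations.
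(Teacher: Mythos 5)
Your proof is correct and follows essentially the same route as the paper: rule out two-element overlaps, identify clusters with up--down permutations of odd length counted by tangent numbers, and obtain~\eqref{tan} by the substitution $y=x\sqrt{1-t}$ in the $\tanh$ series. You simply spell out in detail the steps the paper compresses into the remarks that the pair has ``no self-overlaps'' and that ``the square roots account for the fact that every up--down permutation of length $2l+1$ is an $l$-cluster.''
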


\begin{proof}
This pair of patterns has no self-overlaps at all (both for a pattern with itself, and two patterns with each other); clearly, clusters are nothing but ``up--down'' permutations, that is permutations $a_1a_2\ldots a_{2l}a_{2l+1}$
for which $a_1<a_2>a_3<a_4>\ldots<a_{2l}>a_{2l+1}$. It is well known that the number of such permutations is equal to the tangent number. The square roots in~\eqref{tan} account for the fact that every up--down permutation of length~$2l+1$ is an $l$-cluster.
\end{proof}

\subsection{Applications of the chain method}\label{app-chain}

The cases we consider in this section are some of those where the number of $k$-chains is substantially smaller than the number of $k$-clusters. For example, this happens if the set of forbidden patterns contains the pattern $12\ldots a$, which marks rises of length~$a$ in permutations. 

\subsubsection{Case of the pattern~$12\ldots a$}

The following result is well known.
\begin{theorem}[\cite{EN,GJ1,KitPOP}]\label{rise}
The multiplicative inverse of the exponential generating function for patterns avoiding $12\ldots a$ (``permutations without $a$ consecutive rises'') is given by the formula
\begin{equation}\label{GJ1}
\sum_{k\ge0}\frac{x^{ka}}{(ka)!}-\sum_{k\ge0}\frac{x^{ka+1}}{(ka+1)!}.
\end{equation} 
\end{theorem}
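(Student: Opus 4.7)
The plan is to apply the master equation $A(t)C(t)=1$ from Section~\ref{chains} and compute the signed chain generating function $C(t)$ for $P=\{12\ldots a\}$ directly.

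The core claim I need is that for every $n\ge 1$ there is, up to standardization, exactly one $n$-chain, and it is the monotone permutation $12\ldots p_n$, where $p_1=1$, $p_{2k}=ka$, and $p_{2k+1}=ka+1$ for $k\ge 1$. The proof is by induction on $n$, carrying along the extra invariant that the tail of the $n$-chain has length $\ell_n$ with $\ell_1=\ell_{2k+1}=1$ and $\ell_{2k}=a-1$.

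The base case $n=1$ is built into the definitions. For the inductive step, assume the unique $n$-chain is $12\ldots p_n$ with tail $\tau'$ of length $\ell_n$. An $(n+1)$-chain then arises from a factorization $\tau'=\alpha\beta$ with $\beta$ nonempty together with an extension $\tau$ satisfying $\st(\beta\tau)=12\ldots a$, such that $\beta\tau$ is the unique occurrence of the pattern in $\tau'\tau$. Since $\tau'$ is an increasing block of length $\ell_n$ and $\st(\beta\tau)=12\ldots a$ forces $\beta\tau$ to be increasing of length $a$, the concatenation $\tau'\tau$ is itself an increasing block of length $\ell_n+a-|\beta|$, and consequently contains exactly $\ell_n-|\beta|+1$ increasing consecutive subwords of length $a$. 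Uniqueness forces $|\beta|=\ell_n$, so $|\tau|=a-\ell_n$, and the extended chain is the monotone permutation of length $p_n+a-\ell_n$ with new tail length $a-\ell_n$. Substituting $\ell_{2k+1}=1$ yields $p_{2k+2}=p_{2k+1}+a-1$ and new tail length $a-1$, while $\ell_{2k}=a-1$ yields $p_{2k+1}=p_{2k}+1$ and new tail length $1$, closing the induction.

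With this classification, the nonzero values of $c_{p,n}$ occur exactly at $(0,0)$, $(1,1)$, and at $(ka,2k)$ and $(ka+1,2k+1)$ for $k\ge 1$, each equal to $1$. Hence $c_0=1$, $c_1=-1$, $c_{ka}=1$, $c_{ka+1}=-1$ for $k\ge 1$, and $c_p=0$ for every other $p$, giving
\begin{equation*}
C(t)=\sum_{k\ge 0}\frac{t^{ka}}{(ka)!}-\sum_{k\ge 0}\frac{t^{ka+1}}{(ka+1)!},
\end{equation*}
which is precisely~\eqref{GJ1}. By~\eqref{master} this series is $1/A(t)$, as required. The only delicate step is the occurrence count inside the induction, but it reduces to the elementary fact that an increasing block of length $m$ carries exactly $m-a+1$ increasing consecutive subwords of length $a$, so there is no genuine obstacle.
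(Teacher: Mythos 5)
Your proposal is correct and follows essentially the same route as the paper: the paper's proof consists precisely of the classification you establish (the only $2k$-chain is $12\ldots(ka)$ and the only $(2k+1)$-chain is $12\ldots(ka+1)$), combined with the master identity~\eqref{master}. The only difference is that you supply the inductive verification of this classification (tracking the tail length $\ell_n$ and using that an increasing block of length $m$ has $m-a+1$ occurrences of $12\ldots a$), which the paper simply asserts by listing the chains.
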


\begin{proof}
Indeed, $k$-chains for $k\ge2$ are as follows:
\begin{itemize}
 \item[-] the only $2$-chain is $[12\ldots a]$;
 \item[-] the only $3$-chain is $[12\ldots(a+1)]$;
 \item[-] the only $4$-chain is $[12\ldots(2a)]$;
 \item[-] the only $5$-chain is $[12\ldots(2a+1)]$;
 \item[-] \ldots
 \item[-] the only $(2k)$-chain is $[12\ldots(ka)]$;
 \item[-] the only $(2k+1)$-chain is $[12\ldots(ka+1)]$;
 \item[-] \ldots
\end{itemize}
\end{proof}

Our next result classifies chains for $\{123,132\}$. It is interesting to compare it with the result of Claesson~\cite{Claesson} stating that the number of permutations of length~$n$ avoiding~$\{123,132\}$ is equal to the number of involutions of length~$n$. 

\subsubsection{Case of two patterns~$\{123,132\}$}

\begin{theorem}\label{invol}
The numbers $c_{n,l}$ of $l$-chains of length~$n$ for $\{123,132\}$ satisfy the recurrence relations
\begin{equation}\label{invol-rec}
c_{n,l}=(1-\delta_{n\bmod{3},2})+\sum_{3\le 3k\le n}(n-3k+1)c_{n-3k+1,l-2k+1}+\sum_{4\le 3k+1\le n}(n-3k)c_{n-3k,l-2k}
\end{equation}
with initial conditions $c_{1,l}=\delta_{1,l}$, $c_{2,l}=0$. Consequently, 
the exponential generating function for permutations avoiding $\{123,132\}$ is 
 $$
\left(1-x+\sum_{n\ge2,l\ge2}\frac{(-1)^lc_{n,l}x^n}{n!}\right)^{-1}
 $$
\end{theorem}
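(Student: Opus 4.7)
The plan is to analyze the structure of chains for $P=\{123,132\}$ explicitly, derive the recurrence by decomposing each chain at its \emph{terminal alternating overlap segment}, and then invoke the main theorem to obtain the EGF.

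First, I would carry out a structural analysis of chains. Since both $123$ and $132$ begin with an ascent, an overlap of size two between consecutive patterns $\pi_i,\pi_{i+1}$ forces the last two entries of $\pi_i$ to be ascending, hence $\pi_i=123$; in particular, two overlaps of size two cannot occur back-to-back, since after such a step the tail has length one. Moreover, the ``only occurrence'' requirement in the chain definition rules out a transition with both incoming and outgoing overlaps of size one at a pattern $\pi_i=123$: in that situation the last two entries of $\pi_i$ together with the first new entry of $\pi_{i+1}$ would form an extraneous $123$ in the tail $\tau'\tau$, contradicting the chain property. Consequently, each chain is determined by (i) an overlap sequence $(d_2,\ldots,d_{l-1})\in\{1,2\}^{l-2}$ with no two consecutive $2$'s, (ii) a choice of $\pi_i\in\{123,132\}$ at every free position (namely $i=l-1$, and those $i<l-1$ with $d_i=2$, $d_{i+1}=1$), and (iii) a linear extension of the resulting poset on the $n$ positions; the patterns at all other positions are forced by the rules above.

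Next, I would derive the recurrence by splitting each chain at its terminal alternating segment, i.e.\ the longest suffix of $(d_2,\ldots,d_{l-1})$ of the form $1,2,1,2,\ldots$ starting with $1$. An alternating suffix of $2k-1$ overlaps contributes $3k-1$ new positions and $2k-1$ to the index $l$ (since $k$ of the overlaps are $1$'s and $k-1$ are $2$'s), while an alternating suffix of $2k$ overlaps contributes $3k$ positions and $2k$ to $l$. The isolated term $(1-\delta_{n\bmod 3,\,2})$ should count the chains that consist entirely of such a maximal segment: a direct computation shows that these exist precisely for $n\not\equiv 2\pmod 3$, where the overlap sequence is uniquely determined by $n$. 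The two sums then enumerate the chains whose terminal segment is preceded by a genuine smaller chain of parameters $(n-3k+1,l-2k+1)$ or $(n-3k,l-2k)$; the coefficients $n-3k+1$ and $n-3k$ should record the number of positions of the prefix chain where the terminal segment can be attached, equivalently, the freedom in choosing the value of the junction element shared by the prefix and the terminal.

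Finally, with the recurrence in hand, the EGF formula is immediate from the main theorem $A(t)C(t)=1$: setting $c_n=\sum_l(-1)^l c_{n,l}$ and using the initial data $c_{0,0}=1$, $c_{1,1}=1$, $c_{2,l}=0$, we get $C(x)=1-x+\sum_{n\ge 2,\,l\ge 2}\frac{(-1)^l c_{n,l}}{n!}x^n$, and $A(x)=C(x)^{-1}$. The main obstacle in this plan is the second step: one has to verify that the bijective decomposition into (prefix chain, terminal alternating segment) is compatible with the forced-pattern constraints across the junction, and that the coefficients $(n-3k+1)$ and $(n-3k)$ correctly encode the number of linear extensions at the boundary. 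In particular, the free pattern choice at the last pattern of the prefix chain must be reconciled with whatever is forced by the first overlap of the terminal segment, and one has to confirm that the linear extensions of the full poset factor cleanly into those of the prefix and those of the terminal.
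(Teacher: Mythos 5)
Your structural analysis of $\{123,132\}$-chains is essentially correct (no two consecutive $2$-overlaps; $\pi_i=123$ forced before a $2$-overlap; $\pi_i=132$ forced between two $1$-overlaps; free choices exactly at the last pattern and at positions with incoming overlap $2$ and outgoing overlap $1$), and the final step via $A(t)C(t)=1$ is fine. The gap is in the decomposition. The recurrence~\eqref{invol-rec} does not arise from splitting off a \emph{terminal} alternating segment; it arises from splitting at the \emph{first occurrence of $132$}. The part of the chain preceding the first $132$ is a pure $123$-chain, hence a totally ordered increasing run pinned to the bottom of the chain's poset; the first $132$ attaches to it by one or two entries (according to whether the run has length $3(k-1)+1$ or $3k$), contributes exactly one poset element incomparable to everything above the run (the entry in the place of ``$3$''), and its last entry is the minimum of the remaining chain. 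Because of this the linear extensions factor, and the single free value accounts precisely for the coefficients $n-3k+1$ and $n-3k$, with the residual chain being the $(l-2k+1)$- or $(l-2k)$-chain in the formula. The isolated term counts the unique pure $123$-chain $12\ldots n$, whose overlap sequence is $2,1,2,\ldots$ \emph{starting with $2$} --- it is not a chain ``consisting entirely of a maximal terminal segment'' in your sense.

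Your back-end decomposition fails on exactly the two points you flag as obstacles. First, the terminal segment with overlaps $1,2,1,2,\ldots$ is not rigid: it carries a free binary pattern choice after each $2$-overlap and at the last pattern, so the number of completions of a given prefix is not a function of $n$ and $k$ alone. Second, the junction element is not the bottom of the prefix poset, so the terminal elements (all of which lie above the junction) can interleave with prefix elements incomparable to the junction (the ``$3$''s of earlier $132$'s), and the linear-extension count does not factor into (prefix extensions) times (a size-dependent quantity). Concretely, for $n=8$, $l=5$ the admissible overlap sequences are $(1,1,2)$, $(1,2,1)$, $(2,1,1)$, carrying $48$, $48$, $36$ chains respectively; your assignment by terminal segment would attribute $30$, $6$, $96$ to them, which agrees only in total. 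The front decomposition reproduces the terms of~\eqref{invol-rec} individually: $6c_{6,4}=96$ counts the chains whose first $132$ is $\pi_1$ (the first two sequences), $5c_{5,3}=30$ those where it is $\pi_2$ attached by a $2$-overlap, and $3c_{3,2}=6$ those where it is $\pi_3$. So the recurrence is correct, but to prove it you must replace your decomposition by the one at the first $132$ and verify the factorization of linear extensions there, where it actually holds.
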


\begin{proof}
The permutation~$132$ has no self-overlaps with itself, and have one nontrivial overlap with~$123$. It is easy to see that every $l$-chain for this pair of permutations is either an $l$-chain for $123$ or, for some $k\le l$ it starts from a $(k-1)$-chain for $123$ which overlaps with $132$ (by one or two entries, depending on the parity of~$k$, as for the single pattern $12\ldots a$ in Theorem~\ref{rise}), which overlaps by its last entry with an $(l-k)$-chain. The only parameter here that varies is the entry in the place of~$3$ for~$132$. Taking that into account, we obtain the above recurrence relation. In it, the first summand counts chains for $123$, while the two sums take care of the segment to the first occurrence of $132$ for both possible parities.
\end{proof}

\begin{example}
Computing the first ten chain numbers and inverting the corresponding series, we recover the first ten entries $1$, $1$, $2$, $4$, $10$, $26$, $76$, $232$, $764$, $2620$ of the sequence counting involutions (A000085 in~\cite{oeis}).  
\end{example}

\section{Appendix: a homological interpretation of the main result}\label{shuffle}

In this section, we discuss another interpretation of the formula~\eqref{master}, putting our bijection~$I$ in the context of homological algebra for shuffle algebras~\cite{Ronco} and of modules over the associative operad~\cite{Lat1,Lat2}. 

\subsection{Shuffle algebras}

A shuffle algebra, as defined in the paper of Maria Ronco~\cite{Ronco}, is a graded vector space $A=\bigoplus_{k\ge0}A_k$ together with maps
 $$
\gamma\colon A_n\otimes A_m\otimes\k Sh(n,m)\to A_{n+m},
 $$
subject to certain associativity condition. Here $Sh(n,m)$ denotes the set of all $(n,m)$-shuffles, that is permutations $\sigma\in S_{n+m}$ for which $\sigma(1)<\ldots<\sigma(n)$ and $\sigma(n+1)<\ldots<\sigma(n+m)$. 

One can slightly re-phrase this definitition. Define a new monoidal structure on graded vector spaces as follows: if $A=\bigoplus_{k\ge 0}A_k$ and $B=\bigoplus_{k\ge0}B_k$ are two graded vector spaces, we put 
\begin{equation}\label{shuffleprod}
(A\boxtimes B)_n=\bigoplus_{i=0}^n A_i\otimes B_{n-i}\otimes\k Sh(i,n-i). 
\end{equation}

This defines a monoidal structure on graded vector spaces, and a shuffle algebra is a monoid in this category. The advantage of this approach is that arbitrary constructions and definitions available for monoidal categories (modules, homology, resolutions) enter our story for free.


Within the framework of shuffle algebras, our results of Section~\ref{chains} should be stated as follows. It is easy to see that the graded vector space $\bigoplus_{n\ge0}\k S_n$ is a shuffle algebra; moreover, this shuffle algebra is a free shuffle algebra with one generator of degree~$1$~\cite{Ronco}. 

Let $A$ be the graded vector space whose $n^\text{th}$ component $A_n$ is spanned by all permutations avoiding permutations from the given set $P$. In other words, $A$ is the quotient of the free shuffle algebra with one generator by the two-sided shuffle ideal generated by all permutations from~$P$. We shall now exhibit a free right module resolution of it trivial right module. Let $C$ be the graded vector space whose $n^\text{th}$ component $C_n$ is spanned by $n$-chains. Using the involution~$I$, one can endow the graded space~$C\boxtimes A$ with a structure of a chain complex. The differential $d$ of this complex maps a basis element $\sigma\mid\lambda$ (in the notation of Section~\ref{chains}) to $0$, if $I$ moves the bar in this element to the right, and to $I(\sigma\mid\lambda)$ if $I$ moves the bar to the left. Clearly, $d^2=0$, and this complex is acyclic in positive degrees, so it is a resolution of the trivial module. Computing Euler characteristics of its graded components results in the equation~\eqref{master}. 

\subsection{Modules over the associative operad}

This last section is intended for those readers whose intuition, as it is for us, comes from the operad theory. Essentially, it re-tells the shuffle algebra approach in a slightly different way, explaining also the place for classical pattern avoidance in the story.

Studying varieties of algebras, that is, algebras satisfying certain identities, goes back to works of Specht~\cite{Specht}. The notions of $T$-ideals and $T$-spaces formalize the ways to derive identities from one another. One natural way to study identities is to define an analogue of a Gr\"obner basis for an ideal of identities. This approach is taken in works of Latyshev \cite{Lat1,Lat2} who suggested a combinatorial approach to study associative algebras with additional identities via standard bases of the corresponding $T$-spaces. His approach can be described as follows. For each ``$T$-space'' (in other words, right ideal in the associative operad), he defines a version of a Gr\"obner basis; such a basis would allow to study arbitrary relations via monomials avoiding certain patterns. Here, for once, by a pattern we mean a classical pattern (its occurrence does not have to be as a consecutive subword). This approach has a slight disadvantage. Namely, even though the actual Gr\"obner bases of relations are expected to be finite (at least, the famous result of Kemer~\cite{Kemer} states that in principle there exists a finite set of generating identities), they are difficult to compute, as there is no algorithm comparable to the one due to Buchberger in the associative algebra case~\cite{Ufn}. Somehow, this trouble disappears if we study left ideals in the associative operad. In terms of divisibility of monomials, left and right ideals lead to two very different combinatorial problems: right divisibility translates into occurrences of classical patterns, whereas left divisibility yields occurrences of consecutive ones! For consecutive patterns, the intuition of \cite{DK,DKRes} for Gr\"obner bases and resolutions applies directly, and our results can be interpreted in terms of appropriate free left module resolutions. Let us remark that the difference between chains and clusters is exactly the same as the difference between the Anick's resolution for associative algebras and the not-quite-minimal resolution constructed in~\cite{DKRes}; in this framework we managed to obtain an explicit canonical minimal resolution, unlike what happens in the general operadic case. 

\bibliographystyle{amsplain}
\bibliography{patterns}

\end{document}